\documentclass[12pt]{amsart}
\usepackage[margin=1in]{geometry}
\usepackage{amssymb,amsmath,amsthm,hyperref,mathrsfs,multirow,xcolor}
\usepackage{tikz}
\usetikzlibrary{arrows,matrix,positioning}
%%%%%%%%%%%%%%%%%%%%%%%%%%%%%%%%%%%%
% page dims
%\oddsidemargin = 0.0cm
%\evensidemargin = 0.0cm
%\textwidth = 6.5in
%\textheight =8.0in

%%%%%%%%%%%%%%%%%%%%%%%%%%%%%%%%%%%%

\newtheorem{theorem}{Theorem}

\newtheorem{conj}[theorem]{Conjecture}

\newtheorem*{theoremNoNum}{Theorem}

\newtheorem*{propNoNum}{Proposition}

\theoremstyle{definition}
\newtheorem{definition}{Definition}
\newtheorem{lemma}{Lemma}
\newtheorem{example}{Example}

\theoremstyle{remark}
\newtheorem*{remark}{Remark}

\linespread{1.4}

%\numberwithin{equation}{section}

\allowdisplaybreaks

%%%%%%%%%%%%%%%%%%%%%%%%%%%%%%%%%%%%%%%%%%%%%%%%%%%%%%%%%%%%%%%%%%%%%%%%%%

\begin{document}
%%BEGIN MACROS%%%%%%%%%%%%%%%%%%%%%%%%%%%%%%%%%%%%%%%%%%%%%%%%%%%%%%%%%%%%%%%
\newcommand\mylabel[1]{\label{#1}}
\newcommand{\beqs}{\begin{equation*}}
\newcommand{\eeqs}{\end{equation*}}
\newcommand{\beq}{\begin{equation}}
\newcommand{\eeq}{\end{equation}}
\newcommand\eqn[1]{(\ref{eq:#1})}
\newcommand\exam[1]{\ref{exam:#1}}
\newcommand\thm[1]{\ref{thm:#1}}
\newcommand\lem[1]{\ref{lem:#1}}
\newcommand\propo[1]{\ref{propo:#1}}
\newcommand\corol[1]{\ref{cor:#1}}
\newcommand\sect[1]{\ref{sec:#1}}
\newcommand\subsect[1]{\ref{subsec:#1}}
%%END MACROS%%%%%%%%%%%%%%%%%%%%%%%%%%%%%%%%%%%%%%%%%%%%%%%%%%%%%%%%%%%%%%%

% definitions
\newcommand{\Z}{\mathbb Z}
\newcommand{\N}{\mathbb N}
\newcommand{\R}{\mathbb R}
\renewcommand{\P}{\mathbb P}
\newcommand{\G}{\mathbb G}
\newcommand{\C}{\mathbb C}
\newcommand{\Q}{\mathbb Q}
\newcommand{\cA}{\mathcal A}
\newcommand{\cMT}{\mathcal{MT}}
\newcommand{\cC}{\mathcal C}
\newcommand{\cD}{\mathcal D}
\newcommand{\cS}{\mathcal S}
\newcommand{\cO}{\mathcal O}
\newcommand{\cU}{\mathcal U}
\newcommand{\cG}{\mathcal G}
\newcommand{\cH}{\mathcal H}
\newcommand{\cL}{\mathcal L}
\newcommand{\cZ}{\mathcal Z}
\newcommand{\SL}{\mathrm{SL}}
\newcommand{\gr}{\mathrm{gr}}
\newcommand{\dR}{\mathrm{dR}}
\newcommand{\depth}{\mathfrak{D}}
\newcommand{\ev}{\mathrm{ev}}
\newcommand{\even}{\mathrm{even}}
\newcommand{\odd}{\mathrm{odd}}
\renewcommand{\o}{\mathbf{o}}
\newcommand{\e}{\mathbf{e}}
\renewcommand{\a}{\mathfrak{a}}
\newcommand{\m}{\mathfrak{m}}
\renewcommand{\l}{\mathfrak{l}}

\newcommand{\td}{{\color{red}To do}}

\title[Double Zeta Values relative to $\mu_N$, Hecke Operators, and Newforms]
{Connections between Double Zeta Values relative to $\mu_N$, Hecke Operators $T_N$, and Newforms of Level $\Gamma_0(N)$ for $N=2,3$}

\author{Ding Ma}
\address{Department of Mathematics, University of Arizona, Tucson, AZ. 85721}
\email{martin@math.arizona.edu}

\date{\today}

\begin{abstract}
In this paper, we will study various connections between double zeta values relative to $\mu_N$, Hecke operators $T_N$, and newforms of level $\Gamma_0(N)$ for $N=2,3$. Those various connections generalize the well-known of Baumard and Schneps in \cite{BS}. We also give a conjecture about the Eichler-Shimura-Manin correspondence between $\cS_k^{\textrm{new}}(\Gamma_0(2))^{\pm}$, $\cS_k^{\textrm{new}}(\Gamma_0(3))^{\pm}$ and some period polynomial spaces.
\end{abstract}

\maketitle

%%SECTION 1%%%%%%%%%%%%%%%%%%%%%%%%%%%%%%%%%%%%%%%%%%%%%%%%%%%%%%%%%%%%%%%%%%%%%%%
\section{Introduction and main result}\label{sc1}
Recall that the multiple zeta values (MZVs) are defined by
$$\zeta(n_1,\ldots,n_r):=\sum_{0<k_1<\cdots<k_r}\frac{1}{k_1^{n_1}\cdots k_r^{n_r}}, \quad n_r\neq 1.$$
The weight is $n_1+\cdots+n_r$ and the depth is $r$. Brown considered in \cite{B} the motivic versions of those MZVs, denoted by $\zeta^\m$ which span the $\Q$-vector space of motivic multiple zeta values, denoted by $\cH$. The $\Q$-vector subspace generated by those with all $n_i$'s odd is called the totally odd motivic multiple zeta values, denoted by $\cH^{\odd}$. Both $\cH$ and $\cH^{\odd}$ are equipped with a depth filtration, denoted by $\depth$. He defined for every integer $n\geq 1$ a derivation on the depth-graded totally odd motivic multiple zeta values algebra
\begin{equation}\label{brownop}
\partial_{2n+1}:\gr_{r}^\depth\cH^{\odd}\to\gr_{r-1}^\depth\cH^{\odd}.
\end{equation}
If $m_1+\cdots+m_r=n_1\cdots+n_r$ are integers $\geq 1$, by taking $r$ consecutive derivations, Brown defined the following integers
\begin{equation}
c_{\left(\begin{smallmatrix}2m_1+1&\cdots&2m_r+1\\2n_1+1&\cdots&2n_r+1\end{smallmatrix}\right)}=\partial_{2m_r+1}\cdots\partial_{2m_1+1}\zeta^\m(2n_1+1,\ldots,2n_r+1)\in\Z.
\end{equation}
He claimed that a matrix consisting exactly those integers will capture all the linear relations among the totally odd motivic multiple zeta values of a given weight $k$ and depth $r$.\\
Let $\o\o(k)$ be the totally odd indexing set with the following order
$$\o\o(k)=\{(k-3,3),(k-5,5),\cdots,(3,k-3)\}.$$
Let us denote the reverse ordering indexing set to be $\o\o'(k)$, and
\begin{equation}
\cC_{k,2}^{1}=\big(c_{{x\choose y}}\big)_{\begin{subarray}{c} x \in \o\o(k) \\ y \in \o\o'(k)\end{subarray}}.
\end{equation}
In such a setting, Baumard and Schneps' result can be stated as follows.
\begin{theoremNoNum}[Baumard and Schneps \cite{BS}]
Let $k$ be an even integer. The left kernel of $\cC_{k,2}^{1}$ are exactly those vectors coming from the restricted even period polynomials of cusp forms of weight $k$ for $\SL_2(\Z)$.
\end{theoremNoNum}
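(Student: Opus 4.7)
My plan is to reduce the statement to a polynomial identity that is precisely the defining equation of restricted even period polynomials for cusp forms of weight $k$ on $\SL_2(\Z)$. The proof proceeds in three stages: making the entries $c_{\binom{x}{y}}$ of $\cC_{k,2}^{1}$ completely explicit as combinations of binomial coefficients; translating the left-kernel condition into a functional equation on an associated bivariate polynomial; and finally invoking the Eichler--Shimura--Manin theorem.

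For the first stage I would unwind the infinitesimal coaction used to define Brown's derivation $\partial_{2m+1}$, obtaining an explicit formula for $\partial_{2m+1}\,\zeta^{\m}(2n_1+1,2n_2+1)$ modulo products as a signed combination of binomial coefficients $\binom{2n_i}{2m}$ and $\binom{2n_i}{2m-1}$ multiplied by a single depth-one motivic zeta. A second derivation $\partial_{2m_2+1}$ then reduces the iterated expression to depth zero and produces a purely combinatorial integer. The outcome is that $\cC_{k,2}^{1}$ coincides, up to normalization, with the depth-$2$ binomial matrix familiar from Ihara--Kaneko--Zagier's work on the double shuffle Lie algebra.

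For the second stage I would associate to a candidate row vector $(a_x)_{x\in\o\o(k)}$ in the left kernel the homogeneous polynomial
\beqs
P(X,Y) \;=\; \sum_{(2m_1+1,\,2m_2+1)\in\o\o(k)} a_{(2m_1+1,\,2m_2+1)}\, X^{2m_1}\,Y^{2m_2},
\eeqs
which has degree $k-2$, is automatically even, and carries no pure $X^{k-2}$ or $Y^{k-2}$ boundary monomial. The kernel condition $\sum_x a_x\, c_{\binom{x}{y}}=0$ for every $y\in\o\o'(k)$, rewritten via the binomial formulas of the first stage, collapses --- thanks precisely to the reversal built into $\o\o'(k)$ --- into the three-term Eichler relation
\beqs
P(X,Y) \,+\, P(-Y,\,X+Y) \,+\, P(-X-Y,\,X) \;=\; 0,
\eeqs
which, combined with the automatic evenness, is exactly the defining system for even period polynomials of weight $k$ on $\SL_2(\Z)$ modulo the boundary monomials. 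The Eichler--Shimura--Manin correspondence then identifies this space with the even period polynomials of cusp forms together with the Eisenstein period polynomial $X^{k-2}-Y^{k-2}$; since $\o\o(k)$ excludes the boundary exponents, the Eisenstein contribution is automatically absent, yielding the desired bijection with restricted even period polynomials of $\cS_k(\SL_2(\Z))$.

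The main obstacle will be the middle stage. Turning the raw binomial output of Brown's derivations into the clean cyclic three-term identity requires careful bookkeeping of signs, of the parity constraints forced by the totally odd indexing, and especially of the effect of the reverse ordering $\o\o'(k)$, which is exactly what converts a one-sided binomial sum into the symmetric cyclic sum displayed above. Once this combinatorial identity is established, the Eichler--Shimura identification is standard.
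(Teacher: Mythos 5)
This statement is quoted from Baumard--Schneps; the paper itself gives no proof of it (it is used as an input in Section \ref{sc4}), so there is nothing internal to compare against and I assess your outline on its own terms. The skeleton --- explicit binomial entries, translation of the left-kernel condition into a functional equation, then Eichler--Shimura--Manin --- is the right one, but the middle stage, which you yourself flag as the main obstacle, contains a genuine error and a genuine gap. First, the functional equation you write down is not the one the matrix produces. Specializing Lemma \ref{lem1} to $N=1$ (so $c_{1,1,p}=1$), the left-kernel condition on the polynomial $p$ attached to a row vector is
\beqs
\pi\Big(p(y,x) + p(x,x+y) - p(y,x+y)\Big) = 0,
\eeqs
where $\pi$ projects onto monomials $x^{a}y^{b}$ with $a,b$ even and $2\le a,b\le k-4$. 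Your cyclic relation $P(X,Y)+P(-Y,X+Y)+P(-X-Y,X)=0$, rewritten using evenness of the exponents, is $P(X,Y)+P(Y,X+Y)+P(X+Y,X)=0$; this agrees with the displayed condition only \emph{after} one already knows the antisymmetry $p(y,x)=-p(x,y)$. For a general even polynomial they are different linear conditions, so the asserted "collapse" is false as stated.

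Second, and more seriously, you treat the defining system of even period polynomials as consisting of the three-term relation plus "automatic evenness." It does not: the system is $P|(1+S)=0$ \emph{and} $P|(1+U+U^2)=0$, and for polynomials with even exponents the first relation is exactly the antisymmetry $P(y,x)=-P(x,y)$, which is not automatic. Eichler--Shimura--Manin identifies cusp forms (plus the Eisenstein polynomial) with the solutions of \emph{both} relations, so it cannot be invoked until both are established. Your argument yields at best one linear condition, imposed moreover only modulo the projection $\pi$ (which discards odd-degree and boundary monomials and could a priori enlarge the kernel). The hard direction of Baumard--Schneps is precisely that this single projected condition admits no solutions beyond the restricted even period polynomials --- equivalently, that the rank of $\cC_{k,2}^{1}$ equals $|\o\o(k)|-\dim\cS_k(\SL_2(\Z))$ --- and they need a separate argument for it. As written, your proposal establishes only the easy containment (restricted even period polynomials lie in the left kernel, essentially the Gangl--Kaneko--Zagier direction) and asserts the converse.
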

Here we want to mention that the right kernel of $\cC_{k,2}^1$ gives us all the linear relations among totally odd double zeta values of weight $k$, which was completely studied by Gangl, Kaneko and Zagier in \cite{GKZ}.

In this paper, we will generalize Brown's matrix $\cC^{N}_{k,2}$ to level $N=2,3$ for double zeta values relative to $\mu_N$, and prove the following result.
\begin{theorem}[Connection with Hecke operators]\label{hecke}
Let $k$ be an even integer. When $N=2,3$, the vectors coming from the restricted even period polynomials of cuspidal eigenforms of weight $k$ for $\SL_2(\Z)$ are left eigenvectors of $\cC_{k,2}^{N}$, and the corresponding eigenvalues are given by
\begin{itemize}
\item $N=2$, 
$$\frac{\lambda_2-(1+2^{k-1})}{2^{k-2}},$$
\item $N=3$,
$$\frac{\lambda_3-(1+3^{k-1})}{4\cdot3^{k-2}},$$
\end{itemize}
where $\lambda_2$ (respectively, $\lambda_3$) is the eigenvalue for the Hecke operator $T_2$ (respectively, $T_3$) for the corresponding eigenform.
\end{theorem}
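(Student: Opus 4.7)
The plan is to reduce the theorem to a matrix identity that expresses $\cC_{k,2}^N$ as an affine function of the matrix representing the Hecke operator $T_N$ on a suitable space of period polynomials, and then to invoke the Eichler--Shimura correspondence exactly as Baumard--Schneps did in the level-one case. First I would unpack the definition of $\cC_{k,2}^N$ by writing the level-$N$ derivations $\partial_{2n+1}$ acting on $\gr_r^\depth\cH^\odd$ relative to $\mu_N$ through explicit generating series, so that the matrix entries can be read off from a bilinear pairing on polynomials in two variables $x,y$. For $N=1$ this pairing reproduces the level-one matrix annihilated by the Baumard--Schneps kernel; for $N=2,3$ there is an additional contribution coming from the nontrivial roots of unity, and the first real task is to isolate this extra piece in closed form.

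Second, I would identify that additional contribution with a sum over representatives of the coset decomposition
$$T_N \;=\; \begin{pmatrix}1&0\\0&N\end{pmatrix} + \sum_{j=0}^{N-1}\begin{pmatrix}N&j\\0&1\end{pmatrix}$$
acting on degree-$(k-2)$ polynomials in one variable. The very shape of the stated eigenvalue supports this: the summand $1+N^{k-1}$ equals the two "diagonal" weights $1^{k-1}+N^{k-1}$ coming from $\left(\begin{smallmatrix}1&0\\0&1\end{smallmatrix}\right)$ and $\left(\begin{smallmatrix}N&0\\0&1\end{smallmatrix}\right)$, which naturally separate from the remaining $N$ cosets that should package into $\cC_{k,2}^N$ up to the scalar $\alpha_N N^{k-2}$ with $\alpha_2=1$, $\alpha_3=4$. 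Concretely, the target is an identity of the form
$$\alpha_N N^{k-2}\,\cC_{k,2}^N \;+\; (1+N^{k-1})\,I \;=\; M(T_N),$$
where $M(T_N)$ denotes the matrix of $T_N$ on restricted even period polynomials.

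Third, granted such an identity, the theorem follows immediately: if $f\in\cS_k(\SL_2(\Z))$ is a Hecke eigenform with $T_N f = \lambda_N f$, then by Eichler--Shimura its restricted even period polynomial $\rho_f^+$ satisfies $\rho_f^+|T_N = \lambda_N\rho_f^+$, and the Hecke action preserves the parity decomposition, so the corresponding coefficient vector is a left eigenvector of $\cC_{k,2}^N$ with eigenvalue $(\lambda_N-(1+N^{k-1}))/(\alpha_N N^{k-2})$. The Baumard--Schneps theorem is recovered as the formal limit "$N\to 1$" of this identity, which provides a useful sanity check at intermediate steps.

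The main obstacle is step two: explicitly matching the $\mu_N$-contribution in the level-$N$ Brown derivations with the Hecke coset sum. This requires writing the depth-reduction formulas for double zeta values relative to $\mu_N$ in coordinates that make the $\SL_2(\Z)$-equivariance manifest, then tracking signs, powers of $N$, and character sums through the generating-series computation. I expect the asymmetric factor $\alpha_3=4$ versus $\alpha_2=1$ to emerge from the different normalizations needed to average over the nontrivial characters of $\mu_N$; pinning down these constants cleanly, and verifying compatibility with the ordering conventions in $\o\o(k)$ and $\o\o'(k)$ used to define $\cC_{k,2}^N$, is where most of the genuine work will lie.
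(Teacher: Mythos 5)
Your plan is essentially the paper's proof: one first computes the entries of $\cC_{k,2}^N$ explicitly from Glanois' formula for $D_p$ together with the depth-one relations (Lemma \ref{lem1}), then rewrites the matrix as a combination of polynomial substitution operators which, acting on period polynomials, assemble into Zagier's coset representatives $\textbf{Man}_N$ for $T_N$; your predicted constants $\alpha_2=1$, $\alpha_3=4$ and the shift $1+N^{k-1}$ are all correct. Two warnings about your step two, however. First, the identity $\alpha_N N^{k-2}\,\cC_{k,2}^N+(1+N^{k-1})I=M(T_N)$ cannot hold as a matrix identity; it is only valid on the span of vectors coming from restricted even period polynomials of cusp forms. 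The exact decomposition for $N=2$ is
\begin{equation*}
\cC_{k,2}^2=-A-B+C-\cC_{k,2}^1+(2+2^{-k+2})J,
\end{equation*}
where $A,B,C$ are substitution operators and $J$ is the flip $p(x,y)\mapsto p(y,x)$: converting $J$ into $-I$ uses the antisymmetry $p(y,x)=-p(x,y)$ of even period polynomials, and for $N=3$ one additionally needs evenness to replace $2p(3x,x+y)$ by $p(3x,x+y)+p(3x,x-y)$ modulo odd-degree terms so as to match the six cosets in $\textbf{Man}_3$ (which contain entries $-1$ your coset list does not produce). Second, and more importantly, the residual term $-\cC_{k,2}^1$ in the decomposition is annihilated precisely by the Baumard--Schneps theorem, so their result enters as an essential \emph{input} to the argument rather than being recovered as a formal limit or sanity check, as you suggest. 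With these two corrections your outline coincides with the paper's argument, and the remaining work is exactly the bookkeeping you anticipate.
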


We will also prove the following result for the product $\cD_{k,2}^{N}\cdot\cC_{k,2}^{N}$, where $\cD_{k,2}^{N}$ is a natural diagonal matrix defined later.

\begin{theorem}[Connection with newforms]\label{newform}
Let $k$ be an even integer. When $N=2,3$, the vectors coming from the restricted even period polynomials of newforms of weight $k$ and level $\Gamma_0(N)$ are left eigenvectors of $(\cD_{k,2}^{N}\cdot\cC_{k,2}^{N})$ , and the corresponding eigenvalues are given by
\begin{itemize}
\item $N=2$, 
$$-\bigg(1+\frac{\varepsilon}{2^{\frac{k-2}{2}}}\bigg),$$
\item $N=3$,
$$-\frac{1}{2}\bigg(1+\frac{\varepsilon}{3^{\frac{k-2}{2}}}\bigg),$$
\end{itemize}
where the $\varepsilon=\pm1$ is the eigenvalues of the Atkin-Lehner involution of the corresponding newform.
\end{theorem}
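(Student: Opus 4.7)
The plan is to reduce Theorem~\ref{newform} to a combination of a newform-version of Theorem~\ref{hecke} with the classical Atkin--Lehner formula
\[
a_N(f) \;=\; -\,\varepsilon\,N^{(k-2)/2}
\]
valid for any newform $f$ of weight $k$ on $\Gamma_0(N)$ with $N$ prime (here $\varepsilon = \pm 1$ is the Atkin--Lehner eigenvalue of $f$). The role of the diagonal matrix $\cD_{k,2}^{N}$ is to absorb the parasitic constant $1 + N^{k-1}$ that appears in the eigenvalue expression of Theorem~\ref{hecke}, so that the eigenvalue of $\cD_{k,2}^{N}\cdot\cC_{k,2}^{N}$ depends only on the Hecke eigenvalue $\lambda_N$ itself rather than on an affine shift of it. This makes the subsequent substitution from $\lambda_N$ to $\varepsilon$ particularly clean.

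First I would establish an analogue of Theorem~\ref{hecke} on the newform subspace, asserting that the restricted even period polynomials of $\Gamma_0(N)$-newforms of weight $k$ are left eigenvectors of $\cD_{k,2}^{N}\cdot\cC_{k,2}^{N}$, with eigenvalue equal to a simple affine function of $\lambda_N = a_N(f)$. Concretely, one adapts the Baumard--Schneps construction from \cite{BS} to level $N$: form the coefficient vector of the period polynomial of the newform, check left-invariance under the depth-graded $\mu_N$-decorated MZV derivations defining $\cC_{k,2}^{N}$, and compute the eigenvalue via the Eichler--Shimura pairing, now $\Gamma_0(N)$-equivariant and sensitive to the Atkin--Lehner involution $W_N$. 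The diagonal rescaling $\cD_{k,2}^{N}$ should be identifiable directly from this construction as the unique diagonal correction that normalizes newform periods relative to level-$1$ periods and thereby eliminates the $1 + N^{k-1}$ shift.

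Given that Step~1 produces an eigenvalue that is a simple rational function of $\lambda_N$, the proof concludes by substituting the Atkin--Lehner relation $\lambda_N = -\varepsilon N^{(k-2)/2}$ and simplifying. The expressions $-(1+\varepsilon/2^{(k-2)/2})$ and $-\tfrac{1}{2}(1+\varepsilon/3^{(k-2)/2})$ then emerge directly, with the extra factor $\tfrac{1}{2}$ for $N=3$ reflecting an $N$-dependent normalization built into $\cD_{k,2}^{3}$. The main obstacle is Step~1: one must carefully verify that the period polynomials of $\Gamma_0(N)$-newforms, together with their Atkin--Lehner compatibility, remain eigenvectors of the MZV-derivation matrix in the presence of roots of unity $\mu_N$, and that the diagonal factors encoded by $\cD_{k,2}^{N}$ are exactly those required to strip off the old-form contribution. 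Once this compatibility is pinned down, the Atkin--Lehner substitution is essentially algebraic.
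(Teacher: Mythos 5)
Your overall shape---``show the matrix acts with an eigenvalue expressible through $\lambda_N=a_N(f)$, then substitute the Atkin--Lehner relation $a_N(f)=-\varepsilon N^{(k-2)/2}$''---does touch the last line of the paper's argument, but Step~1, which is where all the content lives, is left as an assertion rather than a proof, and the mechanism you describe for it is not the right one. The paper does not obtain the eigenvalue by ``adapting Baumard--Schneps to level $N$'' or by letting $\cD_{k,2}^{N}$ ``absorb the shift $1+N^{k-1}$'' from Theorem~\ref{hecke}. The proof of Theorem~\ref{hecke} relies essentially on the fact that $\cC_{k,2}^{1}$ annihilates level-one period polynomials (Baumard--Schneps) and on Zagier's coset representatives for $T_N$ acting on \emph{level-one} period polynomials; neither input is available for a newform of level $\Gamma_0(N)$, whose restricted even period polynomial does not satisfy the level-one period relations. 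So there is no ``newform version of Theorem~\ref{hecke}'' to quote, and checking ``left-invariance under the derivations'' is not a routine adaptation---it is the whole theorem.

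What the paper actually does, and what is missing from your plan, is the following. First, Lemma~\ref{lem2} turns $\cD_{k,2}^{N}\cdot\cC_{k,2}^{N}$ into an explicit sum of polynomial substitution operators. Second---and this is the key new ingredient---Section~\ref{sc5} derives period-polynomial formulas for the $U_N$-operator at level $\Gamma_0(N)$, e.g.\ $r_{f|U_2}(x,y)=r_f(x,2y)+r_f(x+y,2y)-r_f(x+y,-2x)$, by a contour computation with the coset representatives $\left(\begin{smallmatrix}1&j\\0&N\end{smallmatrix}\right)$ and suitable elements of $\Gamma_0(N)$; it also derives the Atkin--Lehner relation on period polynomials, $r_f^{\ev,0}(y,Nx)=-N^{\frac{k-2}{2}}\varepsilon_f\,r_f^{\ev,0}(x,y)$. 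Using the latter to rewrite each substitution term, the matrix action is identified with $\frac{\varepsilon_f}{N^{(k-2)/2}}\bigl(r_{f|U_N}^{\ev,0}-r_f^{\ev,0}\bigr)$ (times $\tfrac12$ for $N=3$, where an additional even/odd symmetrization modulo $x^{\odd}y^{\odd}$ is needed), after which $U_Nf=-\varepsilon_f N^{(k-2)/2}f$ gives the stated eigenvalue. Note also that the intermediate eigenvalue is $\frac{\varepsilon_f}{N^{(k-2)/2}}(\lambda_N-1)$, which is not an affine function of $\lambda_N$ alone until one invokes $\lambda_N^2=N^{k-2}$; the relevant operator is $U_N-1$, not a recentred $T_N$. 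Without the $U_N$ period-polynomial formula and the Atkin--Lehner action on period polynomials, your Step~1 cannot be carried out, so the proposal as written has a genuine gap.
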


\begin{remark}
It is worth mentioning that during the proofs of Theorem \ref{hecke} and Theorem \ref{newform}, we can see that the action of $\cC^{N}_{k,2}$ and $\cD^{N}_{k,2}\cdot\cC^{N}_{k,2}$ is nothing but the following two well-known operators (up to some scalar) on the corresponding spaces:
\begin{eqnarray}
\cC^{N}_{k,2} &\longleftrightarrow& T_N-1-N^{k-1}\qquad\textrm{acting on }r_f^{\ev,0}(x,y)\textrm{ of }f\in\cS_k(\SL_2(Z)),\\
\cD^{N}_{k,2}\cdot\cC^{N}_{k,2} &\longleftrightarrow& U_N-1\qquad\qquad\quad\ \textrm{acting on }r_f^{\ev,0}(x,y)\textrm{ of }f\in\cS_k^{\textrm{new}}(\Gamma_0(N))^{\pm},
\end{eqnarray}
where $r_f^{\ev,0}(x,y)$ is the restricted even period polynomial (introduced below) of $f$. We can see that those two theorems are compatible with Baumard and Schneps' result, since when $N=1$ both of them give us the zero action on the restricted even period polynomials of $f\in\cS_k(\SL_2(Z))$ (in this case, every cusp form is a new form).
\end{remark}

During the proof of Theorem \ref{newform}, we found two sets of equations seems to be held only for restricted even period polynomials of level $\Gamma_0(2)$ and $\Gamma_0(3)$ respectively. Therefore, it is natural to make the following conjectures.
\begin{conj}[Eichler-Shimura-Manin correspondence for $\cS_k^{\textrm{new}}(\Gamma_0(2))^{\pm}$ and $\cS_k^{\textrm{new}}(\Gamma_0(3))^{\pm}$]
We have the following isomorphism defined over $\C$.
\begin{eqnarray*}
\cS_k^{\mathrm{new}}(\Gamma_0(2))^{\pm}&\cong&W_{2,\mathrm{new},\pm}^{\ev,0}:=\bigg\{p(x,y)\in\C[x,y]\ \bigg|\ \begin{subarray}{c} -p(y,x)-p(y,x+y)+p(x,x+y)=-p(x,y)\\
-p(y,2x)=\pm 2^{\frac{k-2}{2}}p(x,y)\end{subarray}\bigg\},\\
\cS_k^{\mathrm{new}}(\Gamma_0(3))^{\pm}&\cong&W_{3,\mathrm{new},\pm}^{\ev,0}:=\bigg\{p(x,y)\in\C[x,y]\ \bigg|\ \begin{subarray}{c} -p(y,x)-p(y,x+y)+p(x,x+y)-p(y,x-y)+p(x,x-y)=-p(x,y)\\
-p(y,3x)=\pm 3^{\frac{k-2}{2}}p(x,y)\end{subarray}\bigg\}.
\end{eqnarray*}
\end{conj}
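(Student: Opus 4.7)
The plan is to establish the conjectured isomorphism by separating two directions: the forward inclusion ``restricted even period polynomials of newforms with Atkin--Lehner eigenvalue $\varepsilon$ satisfy both functional equations,'' and the reverse inclusion ``any $p$ satisfying the two functional equations arises from such a newform.''

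The forward direction should follow from the proof of Theorem~\ref{newform} itself, as hinted in the preceding remark. Starting from the two-variable period integral $r_f(x,y) := \int_0^{i\infty} f(\tau)(x-\tau y)^{k-2}\, d\tau$ for $f \in \cS_k(\Gamma_0(N))$, the Atkin--Lehner equation is immediate: the substitution $\tau \mapsto -1/(N\tau)$ combined with $f \mid w_N = \varepsilon f$ yields $-r_f(y, Nx) = \varepsilon\, N^{(k-2)/2}\, r_f(x,y)$, which passes to the restricted even part $r_f^{\ev,0}$. The first equation, per the remark, arises from the identification $\cD^{N}_{k,2} \cdot \cC^{N}_{k,2} \leftrightarrow U_N - 1$ (up to a $N^{(k-2)/2}$ normalization): for a newform of prime level $N$ one has $a_N(f) = -\varepsilon\, N^{(k-2)/2}$, and expanding the action of the $N$ Hecke matrices of $U_N$ on $r_f^{\ev,0}$, then using the Atkin--Lehner condition to rewrite terms with $N$-dilated arguments, should produce the four-term (resp.\ six-term) relation of the conjecture.

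The reverse direction is the main obstacle. The strategy is (i) invoke the Eichler--Shimura--Manin isomorphism for $\cS_k(\Gamma_0(N))$, attaching a cusp form $f$ to any $p$ satisfying the $\Gamma_0(N)$-cocycle relations; (ii) show that the first functional equation is consistent with and refines these cocycle relations; (iii) use the second equation to pin down the Atkin--Lehner eigenvalue of $f$; and (iv) exclude the oldform subspace. Step (iv) requires computing the period polynomial of the oldform $g_N(\tau) := g(N\tau)$ (one finds $r_{g_N}^{\ev,0}(x,y) = N^{-(k-1)}\, r_g^{\ev,0}(Nx, y)$ for $g \in \cS_k(\SL_2(\Z))$), forming the $w_N$-eigenvectors $g \pm N^{k/2} g_N$ in the oldform span, and verifying that substituting the resulting $p$ into the first equation forces a non-trivial identity on $r_g^{\ev,0}$ that is inconsistent with the level-$1$ period relations unless $g = 0$. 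A final dimension count against the standard formulas for $\dim \cS_k^{\mathrm{new}}(\Gamma_0(N))^{\pm}$ closes the argument. The restriction to $N = 2, 3$ presumably enters at this step: the $N+1$ cosets of $\Gamma_0(N) \backslash \SL_2(\Z)$ are few enough that the two functional equations give a rigid enough system to single out newforms, whereas for larger primes $N$ one would expect to need additional equations.
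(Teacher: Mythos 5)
This statement is presented in the paper as a \emph{conjecture}: the author explicitly says only that the two functional equations ``seem to be held'' for restricted even period polynomials of newforms of level $\Gamma_0(2)$ and $\Gamma_0(3)$, and offers numerical evidence up to weight $100$ rather than a proof. So there is no proof in the paper to compare against, and your proposal must be judged on its own. Your forward direction is essentially sound and is in fact already implicit in the paper: the chain of equalities in the proof of Theorem~\ref{newform}, combined with the Atkin--Lehner relation $r_f^{\ev,0}(y,Nx)=-N^{(k-2)/2}\varepsilon_f\, r_f^{\ev,0}(x,y)$, rearranges to give exactly the first equation of $W_{N,\mathrm{new},\pm}^{\ev,0}$ (note, though, that the paper works modulo $x^{k-2}-y^{k-2}$, so you would need to be careful that the identities hold on the nose for the restricted polynomials and not just modulo boundary terms).

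The genuine gap is the reverse inclusion, which is the entire content of the conjecture, and your steps (i)--(iv) are a plan rather than an argument. Concretely: a single polynomial satisfying the two stated equations does not obviously determine a $\Gamma_0(N)$-cocycle, because $\Gamma_0(N)$ is not generated by the two elements underlying the level-one period relations, so invoking Eichler--Shimura--Manin for $\Gamma_0(N)$ in step (i) requires reconstructing a full cocycle from $p$ --- a nontrivial step you do not supply. Moreover, even granting steps (i)--(iii), the space $W_{N,\mathrm{new},\pm}^{\ev,0}$ could a priori contain solutions not in the image of any cusp form (analogues of the polynomial solutions $x^{k-2}-y^{k-2}$ and the Eisenstein contributions that already complicate the level-one Eichler--Shimura story), and your step (iv) excludes only oldforms. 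Finally, the concluding ``dimension count against the standard formulas'' is circular as stated: to use it you must first compute $\dim W_{N,\mathrm{new},\pm}^{\ev,0}$, and bounding that dimension from above is precisely the hard part that the paper could only verify numerically. As it stands the proposal establishes one inclusion and leaves the conjecture open.
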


In Section \ref{sc2}, we will give the detailed background and also explicit construction of our matrices $\cC_{k,2}^N$ and $\cD_{k,2}^N$ for $N=2,3$. We will also list the results we need to use in our proof. In Section \ref{sc3}, we will give the explicit formula for $\cC_{k,2}^N$ and $\cD_{k,2}^N$. In Section \ref{sc5}, we will prove the formula for $U_N$-operator acting on the period polynomial of $f\in\cS_k(\Gamma_0(N))$ for $N=2,3$. In Section \ref{sc4} and Section \ref{sc6}, we will prove Theorem \ref{hecke} and Theorem \ref{newform} respectively. In Section \ref{sc7}, we will give some examples about Theorem \ref{hecke} and Theorem \ref{newform}. Finally, in Section \ref{sc8}, we will state some applications of those two theorems.

%%SECTION 2%%%%%%%%%%%%%%%%%%%%%%%%%%%%%%%%%%%%%%%%%%%%%%%%%%%%%%%%%%%%%%%%%%%%%%%
\section{Background, Definitions}\label{sc2}
In this section, we will introduce the notion of multiple zeta values relative to $\mu_N$. After introducing Glanois' result about derivation operators, we will define the matrix $\cC_{k,2}^N$ and $\cD_{k,2}^N$ for $N=2,3$.\\
Now let us look at the multiple zeta values relative to $\mu_N$. Recall that multiple zeta values relative to $\mu_N$ are defined by
$$\zeta\binom{n_1,\cdots ,n_r}{\varepsilon_1,\cdots,\varepsilon_r}:=\sum_{0<k_1<\cdots<k_r}\frac{\varepsilon_1^{k_1}\cdots \varepsilon_r^{k_r}}{k_1^{n_1}\cdots k_r^{n_r}},\quad \varepsilon_i\in\mu_N,\ (n_r,\varepsilon_r)\neq (1,1).$$
As in the MZVs case, the weight is $n_1+\cdots+n_r$, and the depth is $r$. Denote by $\cZ^N$ the $\Q$-vector space spanned by these multiple zeta values at arguments $n_i\in\Z_{\geq 1}$, $\varepsilon_i\in\mu_N$. Let us consider the motivic versions of those multiple zeta values, denoted by $\zeta^\m$ which span the $\Q$-vector space of motivic multiple zeta values relative to $\mu_N$, denoted by $\cH^N$. There is a surjective homomorphism called the period map as in the MZV case, conjectured also to be an isomorphism:
\begin{eqnarray}
per: \cH^N&\to&\cZ^N\\
\zeta^\m(\cdot)&\mapsto&\zeta(\cdot). \nonumber
\end{eqnarray}

Let $\Pi_{0,1}:=\pi_1^{\dR}(\P^1\setminus\{0,\mu_N,\infty\},\overrightarrow{1}_0,\overrightarrow{-1}_1)$ denote the de Rham realization of the motivic torsor of paths from $0$ to $1$ on $\P^1\setminus\{0,\mu_N,\infty\}$, with tangential basepoints given by the tangent vectors $1$ at $0$ and $-1$ at $1$. Its affine ring of regular functions is the graded algebra for the shuffle product:
\begin{equation}
\cO(\Pi_{0,1})\cong \Q\langle e^0,(e^\eta)_{\eta\in\mu_N}\rangle.
\end{equation}
Let $\cMT_N$ be the Tannakian category of the mixed Tate motives. Denote by $\cMT'_N$ the full Tannakian subcategory of $\cMT_N$ generated by the motivic fundamental groupoid of $\P^1\setminus\{0,\mu_N,\infty\}$. Denote also by $\cG_N=\G_m\ltimes\cU_N$ its Galois group defined over $\Q$, $\cA^N=\cO(\cU_N)$ its fundamental Hopf algebra and $\cL^N:=\cA^N_{>0}/\cA^N_{>0}\cdot \cA^N_{>0}$ the Lie coalgebra of indecomposable elements.

For any integer $N,p\geq 1$, Glanois defined the following derivation operators:
\begin{equation}
D_p:\cH^N\to\cL^N_r\otimes \cH^N
\end{equation}
by using the coaction $\Delta$ for motivic iterated integrals defined by Goncharov \cite{G1} and extended by Brown \cite{B1}. In \cite{Gl}, Glanois gave the following explicit formula for the operator $D_p$.

\begin{propNoNum}[Glanois \cite{Gl}]
For any integers $N,p,r\geq 1$, we have
\begin{eqnarray*}
D_{p}:\gr^\depth_r\cH^N&\to& \gr_1^\depth\cL^N_p\otimes\gr^\depth_{r-1}\cH^N\\
\zeta^\m\binom{n_1,\cdots ,n_r}{\varepsilon_1,\cdots,\varepsilon_r}&\mapsto&\delta_{p=n_1}\zeta^\l\binom{p}{\varepsilon_1}\otimes\zeta^\m\binom{n_2,\cdots}{\varepsilon_2,\cdots}\\ 
&+&\sum_{i=2}^{r-1}\delta_{n_i\leq p<n_i+n_{i-1}-1}(-1)^{p-n_i}\binom{p-1}{p-n_i}\zeta^\l\binom{p}{\varepsilon_i}\otimes\zeta^\m\binom{\cdots,n_i+n_{i-1}-p,\cdots}{\cdots,\varepsilon_{i-1}\varepsilon_i,\cdots}\\
&+&\sum_{i=1}^{r-1}\delta_{n_i\leq p<n_i+n_{i+1}-1}(-1)^{n_i}\binom{p-1}{p-n_i}\zeta^\l\binom{p}{\varepsilon_i^{-1}}\otimes\zeta^\m\binom{\cdots,n_i+n_{i+1}-p,\cdots}{\cdots,\varepsilon_{i+1}\varepsilon_i,\cdots}\\
&+&\sum_{i=2}^{r-1}\delta_{\substack{p=n_i+n_{i-1}-1\\ \varepsilon_{i-1}\varepsilon_i\neq 1}}\bigg((-1)^{n_i}\binom{p-1}{n_i-1}\zeta^\l\binom{p}{\varepsilon_{i-1}^{-1}}+(-1)^{n_{i-1}-1}\binom{p-1}{n_{i-1}-1}\zeta^\l\binom{p}{\varepsilon_i}\bigg)\\
&&\qquad\qquad\qquad\qquad\qquad\qquad\qquad\otimes\zeta^\m\binom{\cdots,1,\cdots}{\cdots,\varepsilon_{i-1}\varepsilon_i,\cdots}\\
&+&\delta_{n_r\leq p\leq n_{r}+n_{r-1}-1}(-1)^{p-n_r}\binom{p-1}{p-n_r}\zeta^\l\binom{p}{\varepsilon_r}\otimes\zeta^\m\binom{\cdots,n_{r-1}+n_r-p}{\cdots,\varepsilon_{r-1}\varepsilon_r}\\
&+&\delta_{\substack{p=n_{r}+n_{r-1}-1\\ \varepsilon_{r-1}\varepsilon_r\neq 1}}(-1)^{n_r-1}\bigg(\binom{p-1}{n_r-1}\zeta^\l\binom{p}{\varepsilon_{r-1}^{-1}}-\binom{p-1}{n_{r-1}-1}\zeta^\l\binom{p}{\varepsilon_r}\bigg)\\
&&\qquad\qquad\qquad\qquad\qquad\qquad\qquad\otimes\zeta^\m\binom{\cdots,1}{\cdots,\varepsilon_{r-1}\varepsilon_{r}}.
\end{eqnarray*}
\end{propNoNum}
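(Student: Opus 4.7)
\medskip

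\noindent\textbf{Proof plan.} The strategy is to derive the formula directly from the motivic coaction on iterated integrals, following the pattern of the proof in the classical ($N=1$) case. First I would write each depth-$r$ motivic multiple zeta value relative to $\mu_N$ as a motivic iterated integral on $\P^1\setminus\{0,\mu_N,\infty\}$: up to a sign
$$
\zeta^\m\binom{n_1,\ldots,n_r}{\varepsilon_1,\ldots,\varepsilon_r}
= (-1)^r\,I^\m\!\bigl(0;\,\eta_1,\{0\}^{n_1-1},\,\eta_2,\{0\}^{n_2-1},\ldots,\eta_r,\{0\}^{n_r-1};\,1\bigr),
$$
where $\eta_i = (\varepsilon_i\varepsilon_{i+1}\cdots\varepsilon_r)^{-1}$, so that the non-zero entries in the word carry all the information about the roots of unity, and the spacing of zeros carries the $n_i$'s.

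Next I would invoke Goncharov's coaction formula (as extended by Brown), which expresses $\Delta I^\m(a_0;a_1,\ldots,a_n;a_{n+1})$ as a sum over increasing subsequences of interior indices; the operator $D_p$ is then obtained by projecting the left tensor factor onto $\gr^\depth_1\cL^N_p$. Under this projection only the subsequences giving a \emph{length two} sub-word in $\cL^N_p$ survive, i.e.\ those of the form $I^\l(a_i;a_{i+1},\ldots,a_{i+p};a_{i+p+1})$, corresponding to picking up exactly $p$ consecutive interior letters bracketed by two boundary letters. Each such sub-word reduces in $\cL^N_p$ to an expression of the form $\zeta^\l\binom{p}{\eta}$ for some root of unity $\eta$ read off the two endpoints; the remaining letters form the quotient iterated integral, which is the depth $r-1$ MZV appearing on the right of the tensor.

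The core of the work is then the combinatorial identification of each surviving term with the appropriate case in the statement. I would organize the analysis by the position of the two boundary letters $a_i,a_{i+p+1}$ of the depth-one sub-word: (i) when both are non-zero $\eta_j$'s at consecutive positions, the sub-word reads $\eta_j,\{0\}^{n_j-1},\eta_{j+1},\{0\}^{\star}$, forcing $n_j\le p<n_j+n_{j+1}-1$ and producing the interior terms of the formula; (ii) when the two boundary letters straddle a non-zero $\eta_j$ with no room on one side, one gets the case $p=n_{j-1}+n_j-1$ with $\varepsilon_{j-1}\varepsilon_j\ne 1$ (the $\ne 1$ constraint comes from the fact that otherwise the length-two iterated integral vanishes in $\cL^N$); (iii) initial and terminal sub-words produce the $i=1$ and $i=r$ boundary contributions separately. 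The binomial coefficients $\binom{p-1}{p-n_i}$ and $\binom{p-1}{n_i-1}$ arise from the standard identity $I(a;\{0\}^{s},b;c) = (-1)^{s}\log$-like expressions whose reduction in $\cL^N$ gives these binomials after the change of variable $t\mapsto t/b$ (or $t\mapsto t/c$), and the signs $(-1)^{p-n_i}$, $(-1)^{n_i-1}$ come from reversing the path when the non-zero endpoint is on the right.

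The only real obstacle is the meticulous bookkeeping of signs, binomials, and the root-of-unity indices $\eta$ after each reduction: one must check that every surviving sub-word matches exactly one term in the formula and that terms with $\varepsilon_{i-1}\varepsilon_i=1$ (resp.\ $\varepsilon_{r-1}\varepsilon_r=1$) cancel out in the boundary cases, explaining the $\delta_{\varepsilon_{i-1}\varepsilon_i\ne 1}$ factors. Finally, to get the depth-graded statement $\gr^\depth_r\cH^N\to\gr^\depth_1\cL^N_p\otimes\gr^\depth_{r-1}\cH^N$, I would discard all contributions in which the right-hand factor has depth strictly less than $r-1$; these are precisely the sub-words that remove more than one non-zero letter, and a quick count shows that the surviving terms are exactly those enumerated above.
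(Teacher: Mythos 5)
The paper does not prove this proposition at all: it is quoted verbatim from Glanois \cite{Gl} (with the remark that \cite{Gl} contains a sign typo), so there is no in-paper argument to compare yours against. Your outline is nonetheless the standard route and the one Glanois herself follows: write $\zeta^\m\binom{n_1,\ldots,n_r}{\varepsilon_1,\ldots,\varepsilon_r}$ as a motivic iterated integral whose letters are $0$'s and the partial products $\eta_i=(\varepsilon_i\cdots\varepsilon_r)^{-1}$, apply the Goncharov--Brown coaction, project the left factor to $\gr_1^\depth\cL^N_p$ so that only weight-$p$, depth-one subwords $I^\l(a_i;a_{i+1},\ldots,a_{i+p};a_{i+p+1})$ survive, and note that passing to $\gr_{r-1}^\depth$ on the right kills every subword whose interior contains more than one nonzero letter. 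That skeleton is correct.

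The gap is that essentially all of the content of the statement lives in the step you defer as ``meticulous bookkeeping.'' The proposition is not a structural claim but an explicit six-term formula, and what must be verified is precisely: which boundary-letter configurations give the strict inequality $n_i\le p<n_i+n_{i-1}-1$ versus the non-strict $n_r\le p\le n_r+n_{r-1}-1$ in the terminal term; where the signs $(-1)^{p-n_i}$ versus $(-1)^{n_i}$ come from (path reversal and the $\zeta^\l\binom{p}{\varepsilon_i^{-1}}$ versus $\zeta^\l\binom{p}{\varepsilon_i}$ dichotomy); and why the degenerate case $p=n_i+n_{i-1}-1$ contributes the two-term bracket only when $\varepsilon_{i-1}\varepsilon_i\neq 1$. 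Your stated reason for the last point (``the length-two iterated integral vanishes'') is not quite the mechanism: when $\varepsilon_{i-1}\varepsilon_i=1$ the issue is that the quotient word acquires a trailing or interior entry $\binom{1}{1}$, and the corresponding contributions are accounted for (or cancel) inside the other sums rather than the subword itself vanishing. Since the formula is delicate enough that even the source has a sign error, a proof that stops at ``one must check that every surviving sub-word matches exactly one term'' has not yet established the statement; it has only identified the correct machine to run.
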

\begin{remark}
There is a typo about one sign in \cite{Gl} for this proposition.
\end{remark}

For $N=2,3$, we have the following relations in depth $1$ due to Deligne and Goncharov \cite{DG}.
\begin{itemize}
\item For $N=2$:
\begin{equation}\label{depth12}
(2^{-2n}-1)\zeta^\l\binom{2n+1}{1}=\zeta^\l\binom{2n+1}{-1}.
\end{equation}
\item For $N=3$:
\begin{equation}\label{depth13}
\zeta^\l\binom{2n+1}{1}(1-3^{2n})=2\cdot 3^{2n}\zeta^\l\binom{2n+1}{\varepsilon_3},\quad \zeta^\l\binom{2n}{1}=0,\quad \zeta^\l\binom{n}{\varepsilon_3}=(-1)^{n-1}\zeta^\l\binom{n}{\varepsilon_3^{-1}},
\end{equation}
where $\varepsilon_3=e^{\frac{2\pi \sqrt{-1}}{3}}$.
\end{itemize}

For $N=2,3$, by using the above relations in depth $1$, Glanois \cite{Gl} defined the following analogous derivations as Brown did for MZVs. For each $(p,\eta)$, define
\begin{equation}
\partial^{\eta}_{p}:\gr_r^\depth\cH^N\to\gr_{r-1}^\depth\cH^N,
\end{equation}
as the composition of $D_p$ followed by the projection:
\begin{eqnarray*}
\pi^\eta:\gr_1^\depth\cL^N_p\otimes\gr_{r-1}^\depth\cH^N &\to& \gr_{r-1}^\depth\cH^N\\
\zeta^\l\binom{p}{\varepsilon}\otimes X&\mapsto& c_{\eta,\varepsilon,p}X,
\end{eqnarray*}
with $c_{\eta,\varepsilon,p}\in\Q$ the coefficients of $\zeta^\l\binom{p}{\eta}$ in the decomposition of $\zeta^\l\binom{p}{\varepsilon}$. Let us fixed the following basis of $\gr_1^\depth\cA_p^N$.
\begin{itemize}
\item For $N=2$, take $\zeta^\a\binom{p}{1}$ for odd $p\geq 1$.
\item For $N=3$, take $\zeta^\a\binom{p}{1}$ for odd $p\geq 1$, and $\zeta^\a\binom{p}{\omega}$ for even $p\geq 2$.
\end{itemize}
Now for this basis, let us simplify our notations about $\partial_p^\eta$ as follows:
\begin{itemize}
\item For $N=2$, write $\partial_p:=\partial_p^{1}$ for odd $p\geq 1$.
\item For $N=3$, write $\partial_p:=\partial_p^{1}$ for odd $p\geq 1$, and $\partial_p:=\partial_p^{\omega}$ for even $p\geq 2$.
\end{itemize}

Now we are ready to define the matrix $\cC_{k,2}^{N}$ and $\cD_{k,2}^{N}$. 
\begin{definition}[Definition of $\cC_{k,2}^{N}$]
Let $k$ be a positive even integer. For $N=2,3$, let $\varepsilon_N=e^{\frac{2\pi \sqrt{-1}}{N}}$. For any $(2m_1+1)+(2m_2+1)=(2n_1+1)+(2n_2+1)=k$, let us define
\begin{equation}
c^N_{\left(\begin{smallmatrix}2m_1+1&2m_2+1\\2n_1+1&2n_2+1\end{smallmatrix}\right)}=\partial_{2m_2+1}\partial_{2m_1+1}\zeta^\m\binom{2n_1+1,2n_2+1}{\varepsilon_N,\varepsilon_N^{-1}}\in\Q.
\end{equation}
We denote the matrix $\cC_{k,2}^{N}$ as follows
\begin{equation}
\cC_{k,2}^{N}=\big(c^N_{{x\choose y}}\big)_{\begin{subarray}{c} x \in \o\o(k) \\ y \in \o\o'(k)\end{subarray}}.
\end{equation}
\end{definition}

\begin{remark}
Notice that we have $\partial_1$ when $N=2,3$, and $\partial_{\even}$ when $N=3$, but here in the definition of $\cC_{k,2}^{N}$ we still only consider $\partial_p$ with odd $p\geq 3$ as in the $\cC_{k,2}^{1}$ case, since we want to set up some connections with restricted even period polynomials.
\end{remark}

\begin{definition}[Definition of $\cD_{k,2}^{N}$]
For any positive even integer $k$, and $N=2,3$, let us define $\cD_{k,2}^{N}$ to be the following diagonal matrix
\begin{equation}\label{cD}
\cD_{k,2}^{N}=\bigg(\delta\binom{m_1,m_2}{n_1,n_2}\cdot c_{1,\varepsilon_N,m_1}^{-1}\bigg)_{\begin{subarray}{c} (m_1,m_2) \in \o\o(k) \\ (n_1,n_2) \in \o\o(k)\end{subarray}},
\end{equation}
where $\varepsilon_N=e^{\frac{2\pi \sqrt{-1}}{N}}$ and $c_{1,\varepsilon_N,p}\in\Q$ is the coefficients of $\zeta^\l\binom{p}{1}$ in the decomposition of $\zeta^\l\binom{p}{\varepsilon_N}$ given in (\ref{depth12}) and (\ref{depth13}).
\end{definition}

%%SECTION 3%%%%%%%%%%%%%%%%%%%%%%%%%%%%%%%%%%%%%%%%%%%%%%%%%%%%%%%%%%%%%%%%%%%%%%%
\section{Explicit Formulas of $\cC_{k,2}^{N}$ and $\cD_{k,2}^{N}\cdot\cC_{k,2}^{N}$ }\label{sc3}
In this section, we will prove explicit formulas for both $\cC_{k,2}^{N}$ and $\cD_{k,2}^{N}\cdot\cC_{k,2}^{N}$.
\begin{lemma}\label{lem1}
For $N=2,3$, let $k$ be a positive even integer and $(m_1,m_2)\in\o\o(k)$ and $(n_1,n_2)\in\o\o'(k)$, we have
\begin{equation*}
(\cC_{k,2}^N)_{\binom{m_1,m_2}{n_1,n_2}}=c_{1,\varepsilon_N,m_1}\bigg(\delta\binom{m_1,m_2}{n_1,n_2}c_{1,\varepsilon_N,m_2}+(-1)^{n_1}\binom{m_1-1}{m_1-n_1}+(-1)^{m_1-n_2}\binom{m_1-1}{m_1-n_2}\bigg),
\end{equation*}
where
\begin{equation*}
c_{1,1,p}=1,\quad c_{1,-1,p}=(2^{-p+1}-1),\quad c_{1,\varepsilon_3,p}=c_{1,\varepsilon_3^{-1},p}=\frac{3^{-p+1}-1}{2}
\end{equation*}
as in (\ref{depth12}) and (\ref{depth13}).
\end{lemma}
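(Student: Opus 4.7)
The plan is to compute $\partial_{m_2}\partial_{m_1}\zeta^\m\binom{n_1,n_2}{\varepsilon_N,\varepsilon_N^{-1}}$ by two applications of Glanois' explicit formula for $D_p$, each followed by the projection $\pi^1$ that defines $\partial_p$ for odd $p$. The formula in the statement should drop out by collecting terms; the key observation driving the shape of the answer is that all surviving summands carry the common prefactor $c_{1,\varepsilon_N,m_1}$.

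First I would apply $D_{m_1}$ to the depth-two motivic zeta value, specializing Glanois' proposition to $r=2$. The two interior sums $\sum_{i=2}^{r-1}$ vanish (empty index range), and the closing correction vanishes because $\varepsilon_1\varepsilon_2=\varepsilon_N\varepsilon_N^{-1}=1$. Three summands survive: the leading $\delta_{p=n_1}\,\zeta^\l\binom{p}{\varepsilon_N}\otimes\zeta^\m\binom{n_2}{\varepsilon_N^{-1}}$, the $i=1$ contribution from $\sum_{i=1}^{r-1}$ carrying $(-1)^{n_1}\binom{p-1}{p-n_1}$, and the $i=r$ boundary contribution carrying $(-1)^{p-n_2}\binom{p-1}{p-n_2}$. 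In the latter two, the identity $\varepsilon_N\varepsilon_N^{-1}=1$ collapses the surviving character to the trivial one, producing $\zeta^\m\binom{n_1+n_2-p}{1}$. The projection $\pi^1$ with $p=m_1$ then extracts $c_{1,\varepsilon_N^{\pm 1},m_1}$ from each $\zeta^\l$-factor; these two coefficients coincide, for $N=2$ because $\varepsilon_N^{-1}=\varepsilon_N$, and for $N=3$ by the reflection identity $\zeta^\l\binom{m_1}{\varepsilon_3}=\zeta^\l\binom{m_1}{\varepsilon_3^{-1}}$ from (\ref{depth13}) valid for odd $m_1$, so the common factor $c_{1,\varepsilon_N,m_1}$ can be pulled out. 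The range constraints $n_1\leq m_1<n_1+n_2-1$ and $n_2\leq m_1\leq n_1+n_2-1$ are automatic: $3\leq m_1\leq k-3$ gives the upper bound, and the binomial $\binom{m_1-1}{m_1-n_j}$ already vanishes when $m_1<n_j$, so the $\delta$-inequalities can be absorbed.

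Then I would apply $\partial_{m_2}$ to each depth-one output. For $\zeta^\m\binom{n_2}{\varepsilon_N^{-1}}$ this yields $\delta_{m_2=n_2}\,c_{1,\varepsilon_N^{-1},m_2}=\delta_{m_2=n_2}\,c_{1,\varepsilon_N,m_2}$ by the same symmetry. For $\zeta^\m\binom{n_1+n_2-m_1}{1}$, the weight identity $n_1+n_2-m_1=m_2$ makes $\partial_{m_2}$ produce $c_{1,1,m_2}=1$ with no delta condition. Rewriting $\delta_{m_1=n_1}\delta_{m_2=n_2}=\delta\binom{m_1,m_2}{n_1,n_2}$ and collecting gives the stated formula; the explicit values of $c_{1,1,p}$, $c_{1,-1,p}$, $c_{1,\varepsilon_3,p}$ are direct readings of (\ref{depth12}) and (\ref{depth13}). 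The main obstacle is pure bookkeeping: pinning down which of Glanois' six summands contribute when $r=2$, verifying the three simplifications (vanishing of the boundary correction, collapse of the depth-one character, and automaticity of the range constraints), and tracking signs carefully — beyond that the lemma is an immediate consequence of Glanois' formula.
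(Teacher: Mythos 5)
Your proposal is correct and follows essentially the same route as the paper: specialize Glanois' formula to $r=2$ (noting the interior sums and the $\varepsilon_{r-1}\varepsilon_r\neq 1$ correction terms drop out), absorb the range constraints into the vanishing-binomial convention, apply $\partial_{m_1}$ and then $\partial_{m_2}$ to the resulting depth-one terms, and collect. Your explicit justification that $c_{1,\varepsilon_N,m_1}=c_{1,\varepsilon_N^{-1},m_1}$ for odd $m_1$ (trivially for $N=2$, via the reflection identity in (\ref{depth13}) for $N=3$) is a point the paper uses silently when pulling out the common prefactor, so it is a welcome addition rather than a deviation.
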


\begin{proof}
For $N=2,3$, and odd $p\geq 3$, let us first compute $D_p$ and $\partial_p$ explicitly on the elements $\zeta^\m\binom{n}{\varepsilon_1}$ for any $N$th root $\varepsilon_1$ and $\zeta^\m\binom{n_1,n_2}{\varepsilon_N,\varepsilon_N^{-1}}$ for $\varepsilon_N=e^{\frac{2\pi \sqrt{-1}}{N}}$ by using Glanois' result, (\ref{depth12}), and (\ref{depth13}).\\
When $r=1$, we have
\begin{equation}\label{dp1}
D_p:\zeta^\m\binom{n}{\varepsilon_1}\longmapsto\delta_{p=n}\zeta^\l\binom{p}{\varepsilon_1}.
\end{equation}
When $r=2$, we have 
\begin{eqnarray*}
D_p:\zeta^\m\binom{n_1,n_2}{\varepsilon_N,\varepsilon_N^{-1}}&\longmapsto&\delta_{p=n_1}\zeta^\l\binom{p}{\varepsilon_N}\otimes\zeta^\m\binom{n_2}{\varepsilon_N^{-1}}\\
&+&\delta_{n_1\leq p< n_1+n_2-1}(-1)^{n_1}\binom{p-1}{p-n_1}\zeta^\l\binom{p}{\varepsilon_N^{-1}}\otimes\zeta^\m\binom{n_1+n_2-p}{1}\\
&+&\delta_{n_2\leq p\leq n_{1}+n_{2}-1}(-1)^{p-n_2}\binom{p-1}{p-n_2}\zeta^\l\binom{p}{\varepsilon_N^{-1}}\otimes\zeta^\m\binom{n_{1}+n_2-p}{1}
\end{eqnarray*}
If we use the convention that $\binom{r}{s}=0$ if $s<0<r$ and $\zeta^\m\binom{n}{1}=0$ if $n\leq 0$, the above expression can be simplified as
\begin{eqnarray}\label{dp2}
D_p:\zeta^\m\binom{n_1,n_2}{\varepsilon_N,\varepsilon_N^{-1}}&\longmapsto&\delta_{p=n_1}\zeta^\l\binom{p}{\varepsilon_N}\otimes\zeta^\m\binom{n_2}{\varepsilon_N^{-1}}\\
&+&(-1)^{n_1}\binom{p-1}{p-n_1}\zeta^\l\binom{p}{\varepsilon_N^{-1}}\otimes\zeta^\m\binom{n_1+n_2-p}{1} \nonumber\\
&+&(-1)^{p-n_2}\binom{p-1}{p-n_2}\zeta^\l\binom{p}{\varepsilon_N^{-1}}\otimes\zeta^\m\binom{n_{1}+n_2-p}{1}\nonumber
\end{eqnarray}
Now from (\ref{dp1}) and (\ref{dp2}), when $(m_1,m_2)\in\o\o(k)$ and $(n_1,n_2)\in\o\o'(k)$, we have
\begin{eqnarray*}
(\cC_{k,2}^N)_{\binom{m_1,m_2}{n_1,n_2}}&=&\partial_{m_2}\partial_{m_1}\zeta^\m\binom{n_1,n_2}{\epsilon_N,\epsilon_N^{-1}}\\
&=&c_{1,\varepsilon_N,m_1}\partial_{m_2}\bigg(\delta_{m_1=n_1}\zeta^\m\binom{n_2}{\varepsilon_N^{-1}}+(-1)^{n_1}\binom{m_1-1}{m_1-n_1}\zeta^\m\binom{n_1+n_2-m_1}{1}\\
&&\qquad\qquad\qquad\qquad +(-1)^{m_1-n_2}\binom{m_1-1}{m_1-n_2}\zeta^\m\binom{n_{1}+n_2-m_1}{1}\bigg)\\
&=&c_{1,\varepsilon_N,m_1}\bigg(\delta\binom{m_1,m_2}{n_1,n_2}c_{1,\varepsilon_N,m_2}+(-1)^{n_1}\binom{m_1-1}{m_1-n_1}+(-1)^{m_1-n_2}\binom{m_1-1}{m_1-n_2}\bigg)
\end{eqnarray*}
\end{proof}

From the above lemma and (\ref{cD}), we have the following explicit formula for $\cD_{k,2}^{N}\cdot\cC_{k,2}^{N}$
\begin{lemma}\label{lem2}
For $N=2,3$, let $k$ be a positive even integer and $(m_1,m_2)\in\o\o(k)$ and $(n_1,n_2)\in\o\o'(k)$, we have
\begin{equation*}
(\cD_{k,2}^N\cdot\cC_{k,2}^N)_{\binom{m_1,m_2}{n_1,n_2}}=\delta\binom{m_1,m_2}{n_1,n_2}c_{1,\varepsilon_N,m_2}+(-1)^{n_1}\binom{m_1-1}{m_1-n_1}+(-1)^{m_1-n_2}\binom{m_1-1}{m_1-n_2}.
\end{equation*}
\end{lemma}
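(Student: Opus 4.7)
The statement is an immediate consequence of Lemma \ref{lem1} combined with the explicit form of $\cD_{k,2}^N$ from equation (\ref{cD}), so the plan is essentially to perform the matrix multiplication and observe a cancellation. My strategy is to first recall from (\ref{cD}) that $\cD_{k,2}^N$ is a diagonal matrix whose $(m_1,m_2)$-diagonal entry is exactly $c_{1,\varepsilon_N,m_1}^{-1}$. Since left-multiplication by a diagonal matrix acts on a product by rescaling each row, the $\binom{m_1,m_2}{n_1,n_2}$-entry of $\cD_{k,2}^N\cdot\cC_{k,2}^N$ equals $c_{1,\varepsilon_N,m_1}^{-1}$ times the $\binom{m_1,m_2}{n_1,n_2}$-entry of $\cC_{k,2}^N$.

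Next, I would plug in the explicit formula from Lemma \ref{lem1}, namely
\[
(\cC_{k,2}^N)_{\binom{m_1,m_2}{n_1,n_2}}=c_{1,\varepsilon_N,m_1}\bigg(\delta\binom{m_1,m_2}{n_1,n_2}c_{1,\varepsilon_N,m_2}+(-1)^{n_1}\binom{m_1-1}{m_1-n_1}+(-1)^{m_1-n_2}\binom{m_1-1}{m_1-n_2}\bigg),
\]
and observe that the overall factor $c_{1,\varepsilon_N,m_1}$ in the Lemma \ref{lem1} formula is exactly cancelled by the factor $c_{1,\varepsilon_N,m_1}^{-1}$ coming from $\cD_{k,2}^N$. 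This leaves precisely the expression claimed in the statement of Lemma \ref{lem2}.

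There is no real obstacle in this proof; the nontrivial content is entirely contained in Lemma \ref{lem1}. The only thing to double-check is that the scalars $c_{1,\varepsilon_N,m_1}$ appearing in $\cD_{k,2}^N$ are nonzero so that the inverses make sense. For $N=2$ this is $c_{1,-1,m_1}=2^{-m_1+1}-1$, and for $N=3$ it is $c_{1,\varepsilon_3,m_1}=(3^{-m_1+1}-1)/2$; both are nonzero for all odd $m_1\geq 3$ that occur in $\o\o(k)$, so the diagonal matrix is well defined and the cancellation is valid. This yields the claimed formula.
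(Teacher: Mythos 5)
Your proposal is correct and matches the paper's approach exactly: the paper derives Lemma \ref{lem2} directly from Lemma \ref{lem1} and the definition \eqref{cD} of the diagonal matrix $\cD_{k,2}^N$, with the factor $c_{1,\varepsilon_N,m_1}^{-1}$ cancelling the prefactor $c_{1,\varepsilon_N,m_1}$, and does not even write out a separate proof. Your added check that $c_{1,\varepsilon_N,m_1}\neq 0$ for the relevant odd $m_1\geq 3$ is a sensible (if routine) verification that the paper leaves implicit.
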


%%SECTION 4%%%%%%%%%%%%%%%%%%%%%%%%%%%%%%%%%%%%%%%%%%%%%%%%%%%%%%%%%%%%%%%%%%%%%%%
\section{Proof of Theorem \ref{hecke}}\label{sc4}
In this section, we will give the proof of theorem. First, we want to introduce the representative sets of the Hecke operator $T_2$, $T_3$ acting on the period polynomial spaces. The following result is due to Zagier.
\begin{propNoNum}[Zagier \cite{Z}]
The representative sets of the the Hecke operator $T_2$, $T_3$ acting on the even period polynomial spaces are given by
\begin{eqnarray*}
\textbf{Man}_2&=&\bigg\{\begin{pmatrix}2&0\\0&1\end{pmatrix},\begin{pmatrix}1&0\\0&2\end{pmatrix},\begin{pmatrix}2&0\\1&1\end{pmatrix},\begin{pmatrix}1&1\\0&2\end{pmatrix}\bigg\},\\
\textbf{Man}_3&=&\bigg\{\begin{pmatrix}3&0\\0&1\end{pmatrix},\begin{pmatrix}1&0\\0&3\end{pmatrix},\begin{pmatrix}3&0\\1&1\end{pmatrix},\begin{pmatrix}3&0\\-1&1\end{pmatrix},\begin{pmatrix}1&1\\0&3\end{pmatrix},\begin{pmatrix}1&-1\\0&3\end{pmatrix}\bigg\}.
\end{eqnarray*}
\end{propNoNum}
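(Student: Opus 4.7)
The plan is to derive the action of the Hecke operator $T_N$ on period polynomials directly from the integral definition, and then to match the matrices that appear with the sets $\textbf{Man}_N$. Starting from the standard representation
$$r_f(x,y)=\int_0^{i\infty}f(z)(x-zy)^{k-2}\,dz$$
and the upper-triangular decomposition
$$T_N f= N^{k-1}\sum_{\substack{ad=N\\ 0\le b<d}}d^{-k}\,f\!\left(\tfrac{az+b}{d}\right),$$
I would substitute into the integral for $r_{T_Nf}$, change variables $w=(az+b)/d$ in each summand, and rewrite each term as an integral of $f$ along a path $b/d \to \infty$, with $(x-zy)^{k-2}$ transformed by the natural action of $\gamma=\bigl(\begin{smallmatrix}a&b\\ 0&d\end{smallmatrix}\bigr)$ on the two polynomial variables.

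The resulting paths are not of the standard form $0 \to i\infty$, and therefore cannot be directly identified with $r_f$. The key step is to apply Manin's continued-fraction decomposition, which expresses each rational path $p/q \to \infty$ canonically as a sum of $\SL_2(\Z)$-translates of the standard path $0\to i\infty$, one for each convergent. Composing the $\SL_2(\Z)$-matrices from these path decompositions on the left with the original Hecke matrices on the right produces a finite list of elements of $\mathrm{GL}_2^+(\Q)$ of determinant $N$; this list is the candidate for $\textbf{Man}_N$.

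Explicitly, for $N=2$ the only non-trivial path is $1/2 \to \infty$, which splits as $(1/2 \to 1)+(1\to\infty)$ via the Farey neighbor $1$ of $1/2$. This contributes the two matrices $\bigl(\begin{smallmatrix}2&0\\ 1&1\end{smallmatrix}\bigr)$ and $\bigl(\begin{smallmatrix}1&1\\ 0&2\end{smallmatrix}\bigr)$, which together with the trivial contributions $\bigl(\begin{smallmatrix}2&0\\ 0&1\end{smallmatrix}\bigr)$ and $\bigl(\begin{smallmatrix}1&0\\ 0&2\end{smallmatrix}\bigr)$ from paths already of the form $0 \to \infty$, recover $\textbf{Man}_2$. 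For $N=3$ the two non-trivial paths $1/3\to\infty$ and $2/3\to\infty$ each split into two Farey segments, producing the four extra matrices listed in $\textbf{Man}_3$ together with the trivial pair. Restricting to the \emph{even} period polynomial space does not alter the matrix set, since the even slash action factors through $\mathrm{GL}_2^+(\Q)/\{\pm 1\}$.

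The main obstacle is the careful bookkeeping in the Farey decomposition: ensuring that the resulting matrices match exactly those in $\textbf{Man}_N$ (rather than merely being $\SL_2(\Z)$-equivalent) requires tracking the sign and orientation conventions in Manin's symbol calculus. For $N=3$ in particular, one has to verify that the Farey neighbors chosen for $2/3$ (passing through $1=1/1$) naturally produce the two matrices with $-1$ entries, corresponding to the continued fraction expansion $2/3 = [0;1,2]$.
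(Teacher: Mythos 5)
The paper does not prove this proposition at all: it is quoted verbatim from Zagier's \emph{Hecke operators and periods of modular forms} and used as a black box in the proof of Theorem \ref{hecke}, so there is no in-paper argument to measure your proposal against. Your route --- write $T_N$ via the upper-triangular coset representatives $\bigl(\begin{smallmatrix}a&b\\0&d\end{smallmatrix}\bigr)$, $ad=N$, $0\le b<d$, substitute into the period integral, and reduce the resulting paths $b/d\to i\infty$ to $\SL_2(\Z)$-translates of $0\to i\infty$ by Manin's unimodular decomposition --- is the standard and correct way to establish it, and is essentially equivalent in substance to Zagier's own argument (which packages the same reduction as a group-ring identity $\tilde T_N(1-S)\equiv(1-S)T_N^\infty$ modulo suitable one-sided ideals rather than as explicit geodesic paths).

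Two bookkeeping points deserve more care than your sketch gives them. First, for $N=3$ the continued fraction $2/3=[0;1,2]$ you invoke produces the \emph{three}-segment chain $\infty\to 0\to 1\to 2/3$, which would yield seven matrices, not six; to land on $\mathbf{Man}_3$ directly you must use the two-edge Farey geodesic $2/3\to 1\to\infty$ (as your ``passing through $1=1/1$'' suggests), or else discard the extra $\{0,\infty\}$-segment term using the period relation $r_f|(1+S)=0$. Second, the raw matrices the computation produces typically differ from the listed ones in the signs of their off-diagonal entries, e.g.\ one naturally obtains $\bigl(\begin{smallmatrix}2&0\\-1&1\end{smallmatrix}\bigr)$ in place of $\bigl(\begin{smallmatrix}2&0\\1&1\end{smallmatrix}\bigr)$; these are conjugate by $\epsilon=\bigl(\begin{smallmatrix}1&0\\0&-1\end{smallmatrix}\bigr)$ and hence act identically on the \emph{even} part of the period polynomial, which is the correct justification for the last step --- the observation that the slash action factors through $\mathrm{GL}_2^+(\Q)/\{\pm1\}$ is true but irrelevant here, since $\epsilon M\epsilon\ne\pm M$ in general. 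Neither issue is fatal, but both must be resolved explicitly for the proof to close.
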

Now we are ready to prove Theorem \ref{hecke}.

\begin{proof}[Proof of Theorem \ref{hecke}]
When $N=2$, for any $(m_1,m_2)\in\o\o(k)$ and $(n_1,n_2)\in\o\o'(k)$, let us define for simplicity that
\begin{equation*}
e\binom{m_1,m_2}{n_1,n_2}=(-1)^{n_1}\binom{m_1-1}{m_1-n_1}+(-1)^{m_1-n_2}\binom{m_1-1}{m_1-n_2}.
\end{equation*}
Notice that this $e$ is exactly the main term in the definition of $(\cC_{k,2}^1)_{\binom{m_1,m_2}{n_1,n_2}}$, i.e.
\begin{equation*}
(\cC_{k,2}^1)_{\binom{m_1,m_2}{n_1,n_2}}=\delta\binom{m_1,m_2}{n_1,n_2}+e\binom{m_1,m_2}{n_1,n_2}.
\end{equation*}

From Lemma \ref{lem1}, we have
\begin{eqnarray*}
(\cC_{k,2}^2)_{\binom{m_1,m_2}{n_1,n_2}}&=&c_{1,-1,m_1}\bigg(\delta\binom{m_1,m_2}{n_1,n_2}c_{1,-1,m_2}+e\binom{m_1,m_2}{n_1,n_2}\bigg)\\
&=&(2^{-m_1+1}-1)(2^{-m_2+1}-1)\delta\binom{m_1,m_2}{n_1,n_2}+(2^{-m_1+1}-1)e\binom{m_1,m_2}{n_1,n_2}\\
&=&-2^{-m_1+1}\delta\binom{m_1,m_2}{n_1,n_2}-2^{-m_2+1}\delta\binom{m_1,m_2}{n_1,n_2}+2^{-m_1+1}e\binom{m_1,m_2}{n_1,n_2}\\
&&-(\cC_{k,2}^1)_{\binom{m_1,m_2}{n_1,n_2}}+2\delta\binom{m_1,m_2}{n_1,n_2}+2^{-k+2}\delta\binom{m_1,m_2}{n_1,n_2}
\end{eqnarray*}
Define
\begin{eqnarray*}
A_{\binom{m_1,m_2}{n_1,n_2}}:=2^{-m_1+1}\delta\binom{m_1,m_2}{n_1,n_2},\\
B_{\binom{m_1,m_2}{n_1,n_2}}:=2^{-m_2+1}\delta\binom{m_1,m_2}{n_1,n_2},\\
C_{\binom{m_1,m_2}{n_1,n_2}}:=2^{-m_1+1}e\binom{m_1,m_2}{n_1,n_2}.
\end{eqnarray*}
Then we have the following decomposition
\begin{equation*}
\cC_{k,2}^2=-A-B+C-\cC_{k,2}^1+2J+2^{-k+2}J,
\end{equation*}
where $J$ is the exchange matrix, i.e. anti-diagonal matrix with all $1$'s.
For any eigenform $f$ of weight $k$ and level $\SL_2(\Z)$ with period polynomial 
\begin{equation}
r_f(x,y)=\int_0^{i\infty}f(z)(zy-x)^{k-2}dz.
\end{equation}
Consider the restricted even part of this period polynomial, we have
\begin{equation}
r^{\ev,0}_f(x,y)=\sum_{(m_1,m_2)\in\o\o(k)} a_{m_1,m_2}x^{m_2-1}y^{m_1-1}.
\end{equation}
We associate this restricted even period polynomial with the following vector
\begin{equation}
v=(a_{m_1,m_2})_{(m_1,m_2)\in\o\o(k)}.
\end{equation}
By the definition of the matrices $A,B,C,J$, we have
\begin{eqnarray*}
\cdot A&:& p(x,y)\mapsto p(\frac{y}{2},x),\\
\cdot B&:& p(x,y)\mapsto p(y,\frac{x}{2}),\\
\cdot C&:& p(x,y)\mapsto p(x,\frac{x+y}{2})-p(y,\frac{x+y}{2}),\\
\cdot J&:& p(x,y)\mapsto p(y,x).
\end{eqnarray*}
Also from Baumard and Schneps' result, we know that
\begin{equation*}
\cdot \cC_{k,2}^1: r^{\ev,0}_f(x,y)\mapsto 0.
\end{equation*}
Therefore, we have
\begin{eqnarray*}
\cdot \cC_{k,2}^2: r^{\ev,0}_f(x,y)&\mapsto& -r^{\ev,0}_f(\frac{y}{2},x)-r^{\ev,0}_f(y,\frac{x}{2})+r^{\ev,0}_f(x,\frac{x+y}{2})-r^{\ev,0}_f(y,\frac{x+y}{2})+(2+2^{-k+2})r^{\ev,0}_f(y,x)\\
&=&r^{\ev,0}_f(x,\frac{y}{2})+r^{\ev,0}_f(\frac{x}{2},y)+r^{\ev,0}_f(x,\frac{x+y}{2})+r^{\ev,0}_f(\frac{x+y}{2},y)-(2+2^{-k+2})r^{\ev,0}_f(x,y)\\
&=&2^{-k+2}\bigg(r^{\ev,0}_f(2x,y)+r^{\ev,0}_f(x,2y)+r^{\ev,0}_f(2x,x+y)+r^{\ev,0}_f(x+y,2y)\bigg)\\
&-&(2+2^{-k+2})r^{\ev,0}_f(x,y)
\end{eqnarray*}
From Zagier's result above, if $f$ is an eigenform with $T_2$-eigenvalues $\lambda_2$, we have
\begin{equation*}
r^{\ev,0}_f(2x,y)+r^{\ev,0}_f(x,2y)+r^{\ev,0}_f(2x,x+y)+r^{\ev,0}_f(x+y,2y)=r^{\ev,0}_{f|T_2}(x,y)=\lambda_2 r^{\ev,0}_f(x,y).
\end{equation*}
Therefore,
\begin{eqnarray*}
\cdot \cC_{k,2}^2: r^{\ev,0}_f(x,y)&\mapsto&2^{-k+2}\bigg(r^{\ev,0}_f(2x,y)+r^{\ev,0}_f(x,2y)+r^{\ev,0}_f(2x,x+y)+r^{\ev,0}_f(x+y,2y)\bigg)\\
&-&(2+2^{-k+2})r^{\ev,0}_f(x,y)\\
&=&\frac{\lambda_2-(1+2^{k-2})}{2^{k-2}}r^{\ev,0}_f(x,y).
\end{eqnarray*}
Hence we have proven the statement for $N=2$.\\
Now let us prove it for $N=3$. By the same computation, we have
\begin{eqnarray*}
\cdot \cC_{k,2}^3: r^{\ev,0}_f(x,y)&\mapsto&\frac{1}{4}r^{\ev,0}_f(x,\frac{y}{3})+\frac{1}{4}r^{\ev,0}_f(\frac{x}{3},y)+\frac{1}{2}r^{\ev,0}_f(x,\frac{x+y}{3})+\frac{1}{2}r^{\ev,0}_f(\frac{x+y}{3},y)\\
&-&\frac{1}{4}(3+3^{-k+2})r^{\ev,0}_f(x,y)\\
&=&\frac{3^{-k+2}}{4}\bigg(r^{\ev,0}_f(3x,y)+r^{\ev,0}_f(x,3y)+2r^{\ev,0}_f(3x,x+y)+2r^{\ev,0}_f(x+y,3y)\bigg)\\
&-&\frac{3+3^{-k+2}}{4}r^{\ev,0}_f(x,y),
\end{eqnarray*}
where $\frac{1}{2}$ and $\frac{1}{4}$ are coming from the $\frac{1}{2}$ in the following identity
\begin{equation*}
c_{1,\varepsilon_3,p}=c_{1,\varepsilon_3^{-1},p}=\frac{3^{-p+1}-1}{2}.
\end{equation*}
Now since $r^{\ev,0}_f(x,y)$ is an even polynomial, and we only need the even degree part in the final expression, we have
\begin{eqnarray*}
2r^{\ev,0}_f(3x,x+y)&\equiv& r^{\ev,0}_f(3x,x+y)+r^{\ev,0}_f(3x,x-y)\quad\bmod(x^{\odd}y^{\odd}),\\
2r^{\ev,0}_f(x+y,xy)&\equiv& r^{\ev,0}_f(x+y,3y)+r^{\ev,0}_f(x-y,3y)\quad\bmod(x^{\odd}y^{\odd}).\\
\end{eqnarray*}
Therefore, we have
\begin{eqnarray*}
\cdot \cC_{k,2}^3: p(x,y)&\mapsto&\frac{3^{-k+2}}{4}\bigg(r^{\ev,0}_f(3x,y)+r^{\ev,0}_f(x,3y)+2r^{\ev,0}_f(3x,x+y)+2r^{\ev,0}_f(x+y,3y)\bigg)\\
&-&\frac{3+3^{-k+2}}{4}r^{\ev,0}_f(x,y)\\
&=&\frac{3^{-k+2}}{4}r^{\ev,0}_{f|T_3}(x,y)-\frac{3+3^{-k+2}}{4}r^{\ev,0}_f(x,y)\\
&=&\frac{\lambda_3-(1+3^{k-1})}{4\cdot3^{k-2}}r^{\ev,0}_f(x,y).
\end{eqnarray*}
Hence we have also proven the statement for $N=3$.
\end{proof}

%%SECTION 5%%%%%%%%%%%%%%%%%%%%%%%%%%%%%%%%%%%%%%%%%%%%%%%%%%%%%%%%%%%%%%%%%%%%%%%
\section{$U_2$, $U_3$-action on period polynomials}\label{sc5}
In this section, we will prove a result about the $U_N$-operator acting on the period polynomial of cusp form of weight $k$ and level $\Gamma_0(N)$ for $N=2,3$.
\begin{lemma}
For any $f\in\cS_k(\Gamma_0(2))$, let $r_f(x,y)$ denote its period polynomial, then we have
\begin{equation}\label{U2}
r_{f|U_2}(x,y)=r_f(x,2y)+r_f(x+y,2y)-r_f(x+y,-2x)
\end{equation}
\end{lemma}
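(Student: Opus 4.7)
The strategy is to unfold the definition of $U_{2}$, apply a change of variables in each resulting integral, and use the modular invariance of $f$ under a carefully chosen element of $\Gamma_{0}(2)$ to rewrite the only non-standard integration range (namely $[0,\tfrac{1}{2}]$) as an integral back along $[0,i\infty]$.

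First I would write $(f|U_{2})(z)=\tfrac{1}{2}\bigl[f(z/2)+f((z+1)/2)\bigr]$ and substitute this into $r_{f|U_{2}}(x,y)=\int_{0}^{i\infty}(f|U_{2})(z)(zy-x)^{k-2}\,dz$, splitting the result according to $b\in\{0,1\}$. For the $b=0$ term, the substitution $u=z/2$ turns $(zy-x)^{k-2}$ into $(u(2y)-x)^{k-2}$, and the factor $dz=2\,du$ cancels the leading $\tfrac{1}{2}$, yielding exactly $r_{f}(x,2y)$. For the $b=1$ term, the substitution $u=(z+1)/2$ gives $\int_{1/2}^{i\infty}f(u)(u(2y)-(x+y))^{k-2}\,du$; writing this as $\int_{0}^{i\infty}-\int_{0}^{1/2}$ produces the main term $r_{f}(x+y,2y)$ plus a boundary correction that must be identified.

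The heart of the argument is the evaluation of this boundary correction $\int_{0}^{1/2}f(u)(u(2y)-(x+y))^{k-2}\,du$. Here I would take $\gamma=\bigl(\begin{smallmatrix}1 & 0\\ 2 & 1\end{smallmatrix}\bigr)\in\Gamma_{0}(2)$, which satisfies $\gamma(0)=0$ and $\gamma(\infty)=1/2$, and apply the substitution $u=\gamma v=v/(2v+1)$. A direct computation of the linear factor yields $u(2y)-(x+y)=\bigl(v(-2x)-(x+y)\bigr)/(2v+1)$, while the Jacobian is $du=(2v+1)^{-2}\,dv$ and weight-$k$ modular invariance gives $f(u)=(2v+1)^{k}f(v)$. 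The three resulting powers of $(2v+1)$ cancel exactly, leaving $\int_{0}^{i\infty}f(v)(v(-2x)-(x+y))^{k-2}\,dv=r_{f}(x+y,-2x)$. Subtracting, one recovers the claimed identity $r_{f|U_{2}}(x,y)=r_{f}(x,2y)+r_{f}(x+y,2y)-r_{f}(x+y,-2x)$.

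The only real obstacle is careful bookkeeping of the three distinct powers of $(2v+1)$ arising from the Jacobian, the automorphy factor, and the linearized form, together with checking that any $(-1)^{k-2}$ appearing on the left matches the one implicit in $r_{f}(x+y,-2x)$. The genuinely modular input is the single choice $\gamma\in\Gamma_{0}(2)$ sending $\infty$ to $\tfrac{1}{2}$, which is what forces the level-$2$ restriction; everything else is a routine substitution, and the same template will produce the analogous $U_{3}$ formula by replacing $\gamma$ with an appropriate matrix in $\Gamma_{0}(3)$ sending $\infty$ to the boundary point $1/3$ (or $-1/3$).
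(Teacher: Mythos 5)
Your proposal is correct and follows essentially the same route as the paper: unfold $U_2$ via the coset representatives $\bigl(\begin{smallmatrix}1&0\\0&2\end{smallmatrix}\bigr)$, $\bigl(\begin{smallmatrix}1&1\\0&2\end{smallmatrix}\bigr)$, split off the $[0,\tfrac12]$ tail, and eliminate it with the same $\gamma=\bigl(\begin{smallmatrix}1&0\\2&1\end{smallmatrix}\bigr)\in\Gamma_0(2)$, with the powers of $(2v+1)$ cancelling exactly as you describe. Your closing remark about the $U_3$ analogue also matches the paper, except that there two matrices are needed (one for the translation by $1$ and one, namely $\bigl(\begin{smallmatrix}2&-1\\3&-1\end{smallmatrix}\bigr)$, for the remaining boundary segment).
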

\begin{proof}
By definition, we have
\begin{equation*}
r_f(x,y)=\int_0^{i\infty}f(z)(zy-x)^{k-2}dz.
\end{equation*}
For $f\in\cS_k(\Gamma_0(2))$, the coset representative for the $U_2$-operator is given by $(\begin{smallmatrix}1&0\\0&2\end{smallmatrix})$ and $(\begin{smallmatrix}1&1\\0&2\end{smallmatrix})$. Then we have
\begin{eqnarray*}
r_{f|U_2}(x,y)&=&\int_0^{i\infty}\frac{1}{2}f\bigg(\frac{z}{2}\bigg)(zy-x)^{k-2}dz+\int_0^{i\infty}\frac{1}{2}f\bigg(\frac{z+1}{2}\bigg)(zy-x)^{k-2}dz\\
&=&\int_0^{i\infty}f(z)(2zy-x)^{k-2}dz+\int_{\frac{1}{2}}^{i\infty}f(z)((2z-1)y-x)^{k-2}dz\\
&=&\int_0^{i\infty}f(z)(2yz-x)^{k-2}dz+\bigg(\int_{0}^{i\infty}-\int_{0}^{\frac{1}{2}}\bigg)f(z)(2yz-(x+y))^{k-2}dz\\
&=&r_f(x,2y)+r_f(x+y,2y)-\int_{0}^{\frac{1}{2}}f(z)(2yz-(x+y))^{k-2}dz\\
&=&r_f(x,2y)+r_f(x+y,2y)-\int_{0}^{i\infty}f(\gamma z)(2y\gamma(z)-(x+y))^{k-2}d\gamma(z),
\ \gamma=(\begin{smallmatrix}1&0\\2&1\end{smallmatrix})\in\Gamma_0(2)\\
&=&r_f(x,2y)+r_f(x+y,2y)-\int_{0}^{i\infty}(2z+1)^k f|[\gamma]_k(z)\frac{1}{(2z+1)^{k}}(-2xz-(x+y))^{k-2}dz\\
&=&r_f(x,2y)+r_f(x+y,2y)-\int_{0}^{i\infty}f(z)(-2xz-(x+y))^{k-2}dz\\
&=&r_f(x,2y)+r_f(x+y,2y)-r_f(x+y,-2x).
\end{eqnarray*}
\end{proof}

\begin{lemma}
For any $f\in\cS_k(\Gamma_0(3))$, let $r_f(x,y)$ denote its period polynomial, then we have
\begin{equation}\label{U3}
r_{f|U_3}(x,y)=r_f(x,3y)+r_f(x+y,3y)-r_f(x+y,-3x)+r_f(x-y,3y)-r_f(-(x-y),-3x)
\end{equation}
\end{lemma}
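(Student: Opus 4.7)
The plan is to mimic almost verbatim the argument for the $U_2$ case, but now with three coset representatives for $\Gamma_0(3)\backslash\Gamma_0(3)\begin{pmatrix}1&0\\0&3\end{pmatrix}\Gamma_0(3)$, namely $\begin{pmatrix}1&0\\0&3\end{pmatrix}$, $\begin{pmatrix}1&1\\0&3\end{pmatrix}$, and $\begin{pmatrix}1&-1\\0&3\end{pmatrix}$ (choosing the last as $-1$ rather than $2$ is convenient so that the resulting period integrals have symmetric limits about $0$). Writing
\begin{equation*}
r_{f|U_3}(x,y)=\tfrac{1}{3}\sum_{j\in\{0,1,-1\}}\int_0^{i\infty}f\!\left(\tfrac{z+j}{3}\right)(zy-x)^{k-2}\,dz,
\end{equation*}
I would substitute $z=3w-j$ in each integral as in the $U_2$ proof. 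The $j=0$ term becomes $r_f(x,3y)$ immediately. The $j=1$ term becomes $\int_{1/3}^{i\infty}f(w)(3yw-(x+y))^{k-2}\,dw$, which I split as $\int_0^{i\infty}-\int_0^{1/3}$; this is exactly parallel to the $U_2$ case and, after applying $\gamma=\begin{pmatrix}1&0\\3&1\end{pmatrix}\in\Gamma_0(3)$ to the short integral, yields $r_f(x+y,3y)-r_f(x+y,-3x)$.

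The genuinely new piece is the $j=-1$ term, which becomes $\int_{-1/3}^{i\infty}f(w)(3yw-(x-y))^{k-2}\,dw = r_f(x-y,3y)+\int_{-1/3}^{0}f(w)(3yw-(x-y))^{k-2}\,dw$. To handle the short integral with negative endpoint I would use $\gamma=\begin{pmatrix}1&0\\-3&1\end{pmatrix}\in\Gamma_0(3)$, which sends $0\mapsto 0$ and $i\infty\mapsto -1/3$. The algebra is essentially the same as in the $U_2$ argument: the factor $(-3z+1)^k$ coming from $f(\gamma z)=(-3z+1)^k f(z)$ exactly cancels the $(-3z+1)^{-k+2}$ from $(3y\gamma(z)-(x-y))^{k-2}$ together with the Jacobian $(-3z+1)^{-2}$, leaving after a direct expansion $3y\gamma(z)-(x-y)=(3xz-(x-y))/(-3z+1)$, so that the integrand reduces to $f(z)(3xz-(x-y))^{k-2}$, producing $\pm r_f(x-y,3x)$ with a sign from reversing the orientation of the path.

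Finally, to reconcile the $r_f(x-y,3x)$ produced by the calculation with the $r_f(-(x-y),-3x)$ appearing in the claimed identity, I would invoke the elementary evenness property $r_f(-a,-b)=r_f(a,b)$, which holds because $k$ is even and $(zb-a)^{k-2}=(z(-b)-(-a))^{k-2}$. Collecting the three contributions then gives exactly the stated formula. The only real obstacle is being careful with the orientation of the short integral $\int_{-1/3}^{0}$ and with the sign of $d\gamma(z)$ for $\gamma=\begin{pmatrix}1&0\\-3&1\end{pmatrix}$, since a sign error there would flip the last term; everything else is a faithful analogue of the $N=2$ computation already displayed.
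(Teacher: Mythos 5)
Your proposal is correct and follows essentially the same method as the paper: unfold the three coset representatives of $U_3$ into period integrals, shift each path to start at $0$, and use $\Gamma_0(3)$-invariance to rewrite the leftover short integrals as full period integrals. The only difference is the choice of the third representative: the paper takes $(\begin{smallmatrix}1&2\\0&3\end{smallmatrix})$ and handles the resulting $\int_{2/3}^{i\infty}$ by splitting at $1$, using the translation $(\begin{smallmatrix}1&1\\0&1\end{smallmatrix})$ and the matrix $(\begin{smallmatrix}2&-1\\3&-1\end{smallmatrix})$, which produces the term $-r_f(-(x-y),-3x)$ literally; your choice $(\begin{smallmatrix}1&-1\\0&3\end{smallmatrix})$ needs only the single matrix $(\begin{smallmatrix}1&0\\-3&1\end{smallmatrix})$ but lands on $-r_f(x-y,3x)$, so you must finish with the identity $r_f(-a,-b)=r_f(a,b)$, which is legitimate here since $k-2$ is even (and $k$ odd forces $f=0$ on $\Gamma_0(3)$ anyway). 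Your sign bookkeeping for the $j=-1$ term checks out: $\gamma=(\begin{smallmatrix}1&0\\-3&1\end{smallmatrix})$ sends $0\mapsto0$, $i\infty\mapsto-1/3$, the factors $(-3z+1)^{k}$, $(-3z+1)^{-(k-2)}$ and the Jacobian $(-3z+1)^{-2}$ cancel exactly, and the orientation reversal supplies the minus sign, so the three contributions sum to the stated formula.
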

\begin{proof}
By definition, we have
\begin{equation*}
r_f(x,y)=\int_0^{i\infty}f(z)(zy-x)^{k-2}dz.
\end{equation*}
For $f\in\cS_k(\Gamma_0(3))$, the coset representative for the $U_3$-operator is given by $(\begin{smallmatrix}1&0\\0&3\end{smallmatrix})$, $(\begin{smallmatrix}1&1\\0&3\end{smallmatrix})$, and $(\begin{smallmatrix}1&2\\0&3\end{smallmatrix})$. Then we have
\begin{eqnarray*}
&&r_{f|U_3}(x,y)\\
&=&\int_0^{i\infty}\frac{1}{3}f\bigg(\frac{z}{3}\bigg)(zy-x)^{k-2}dz+\int_0^{i\infty}\frac{1}{3}f\bigg(\frac{z+1}{3}\bigg)(zy-x)^{k-2}dz+\int_0^{i\infty}\frac{1}{3}f\bigg(\frac{z+2}{3}\bigg)(zy-x)^{k-2}dz\\
&=&\int_0^{i\infty}f(z)(3zy-x)^{k-2}dz+\int_{\frac{1}{3}}^{i\infty}f(z)((3z-1)y-x)^{k-2}dz+\int_{\frac{2}{3}}^{i\infty}f(z)((3z-2)y-x)^{k-2}dz\\
&=&\int_0^{i\infty}f(z)(3yz-x)^{k-2}dz+\bigg(\int_{0}^{i\infty}-\int_{0}^{\frac{1}{3}}\bigg)f(z)(3yz-(x+y))^{k-2}dz\\
&&\qquad\qquad+\bigg(\int_{1}^{i\infty}-\int_{1}^{\frac{2}{3}}\bigg)f(z)(3yz-(x+2y))^{k-2}dz\\
&=&r_f(x,3y)+r_f(x+y,3y)-\int_{0}^{\frac{1}{3}}f(z)(3yz-(x+y))^{k-2}dz\\
&&\qquad\qquad+\int_{1}^{i\infty}f(z)(3yz-(x+2y))^{k-2}dz-\int_{1}^{\frac{2}{3}}f(z)(3yz-(x+2y))^{k-2}dz.
\end{eqnarray*}
Now let us compute the last three integrals.
\begin{eqnarray*}
\int_{0}^{\frac{1}{3}}f(z)(3yz-(x+y))^{k-2}dz&=&\int_{0}^{i\infty}f(\gamma_1 z)(3y\gamma_1(z)-(x+y))^{k-2}d\gamma_1(z),\ \gamma_1=(\begin{smallmatrix}1&0\\3&1\end{smallmatrix})\in\Gamma_0(3)\\
&=&\int_{0}^{i\infty}f(z)(-3xz-(x+y))^{k-2}dz\\
&=&r_f(x+y,-3x)
\end{eqnarray*}
\begin{eqnarray*}
\int_{1}^{i\infty}f(z)(3yz-(x+2y))^{k-2}dz&=&\int_{0}^{i\infty}f(\gamma_2 z)(3y\gamma_2(z)-(x+2y))^{k-2}d\gamma_2(z),\ \gamma_2=(\begin{smallmatrix}1&1\\0&1\end{smallmatrix})\in\Gamma_0(3)\\
&=&\int_{0}^{i\infty}f(z)(3yz-(x-y))^{k-2}dz\\
&=&r_f(x-y,3y)
\end{eqnarray*}
\begin{eqnarray*}
\int_{1}^{\frac{2}{3}}f(z)(3yz-(x+2y))^{k-2}dz&=&\int_{0}^{i\infty}f(\gamma_3 z)(3y\gamma_3(z)-(x+2y))^{k-2}d\gamma_3(z),\ \gamma_3=(\begin{smallmatrix}2&-1\\3&-1\end{smallmatrix})\in\Gamma_0(3)\\
&=&\int_{0}^{i\infty}f(z)(-3xz+(x-y))^{k-2}dz\\
&=&r_f(-(x-y),-3x)
\end{eqnarray*}
Therefore, we have
\begin{eqnarray*}
&&r_{f|U_3}(x,y)\\
&=&r_f(x,3y)+r_f(x+y,3y)-\int_{0}^{\frac{1}{3}}f(z)(3yz-(x+y))^{k-2}dz\\
&&\qquad\qquad+\int_{1}^{i\infty}f(z)(3yz-(x+2y))^{k-2}dz-\int_{1}^{\frac{2}{3}}f(z)(3yz-(x+2y))^{k-2}dz\\
&=&r_f(x,3y)+r_f(x+y,3y)-r_f(x+y,-3x)+r_f(x-y,3y)-r_f(-(x-y),-3x)
\end{eqnarray*}
\end{proof}
In particular, (\ref{U2}) and (\ref{U3}) also hold for restricted even period polynomials. We also need the following fact about the action of the Atkin-Lehner involution $W_N$ defined by
\begin{eqnarray*}
W_N:\cS_k(\Gamma_1(N))&\to&\cS_k(\Gamma_1(N))\\
f&\mapsto& N^{\frac{2-k}{2}}f|(\begin{smallmatrix}0&-1\\N&0\end{smallmatrix})_k
\end{eqnarray*}
For such a operator $W_N$, we can decompose $\cS_k(\Gamma_1(N))$ into two eigenspaces $\cS_k(\Gamma_1(N))^{\pm}$ such that
\begin{equation*}
\cS_k(\Gamma_1(N))^{\pm}=\{f\in \cS_k(\Gamma_1(N))\ |\ W_Nf=\pm f\}.
\end{equation*}
Remember that when $k$ is even and $N=2,3$, we have $\cS_k(\Gamma_1(N))=\cS_k(\Gamma_0(N))$. Therefore in those cases, this operator $W_N$ also gives us a decomposition
\begin{equation*}
\cS_k(\Gamma_0(N))=\cS_k(\Gamma_0(N))^{+}\oplus \cS_k(\Gamma_0(N))^{-}.
\end{equation*}
For each $f\in\cS_k(\Gamma_0(N))^{\pm}$, denote its eigenvalues by $\varepsilon_f$, then for the corresponding period polynomial, we have
\begin{eqnarray*}
&&N^{\frac{k-2}{2}}\varepsilon_f\cdot r_f(x,y)\\
&=&r_{f|(\begin{smallmatrix}0&-1\\N&0\end{smallmatrix})_k}(x,y)\\
&=&\int_0^{i\infty}N^{k-1}\frac{1}{(Nz)^k}f(\frac{-1}{Nz})(zy-x)^{k-2}dz\\
\bigg(z\mapsto \frac{-1}{Nz}\bigg)\qquad&=&\int_{i\infty}^0 N^{k-1}(-z)^k f(z)\bigg(\frac{-y}{Nz}-x\bigg)^{k-2}d\bigg(\frac{-1}{Nz}\bigg)\\
&=&-\int_0^{i\infty}f(z)(xNz+y)^{k-2}dz\\
&=&-r_f(-y,Nx).
\end{eqnarray*}
Therefore, for the restricted even period polynomial of $f\in\cS_k(\Gamma_0(N))^{\pm}$, we have
\begin{equation}\label{WN}
r_f^{\ev,0}(y,Nx)=-N^{\frac{k-2}{2}}\varepsilon_f\cdot r_f^{\ev,0}(x,y)
\end{equation}
 
%%SECTION 6%%%%%%%%%%%%%%%%%%%%%%%%%%%%%%%%%%%%%%%%%%%%%%%%%%%%%%%%%%%%%%%%%%%%%%%
\section{Proof of Theorem \ref{newform}}\label{sc6}
Now we are ready to prove Theorem \ref{newform}.
\begin{proof}[Proof of Theorem \ref{newform}]
When $N=2$, as in the proof of Theorem \ref{hecke}, we know that for any restricted even period polynomial $r_f^{\ev,0}(X,Y)$ of $f\in \cS_k^{\textrm{new}}(\Gamma_0(2))^{\pm}$, the matrix $(\cD_{k,2}^2\cdot \cC_{k,2}^2)$ maps it to
\begin{eqnarray*}
\cdot (\cD_{k,2}^2\cdot \cC_{k,2}^2): r_f^{\ev,0}(x,y)&\mapsto&-r_f^{\ev,0}(y,x)-r_f^{\ev,0}(y,x+y)+r_f^{\ev,0}(x,x+y)+r_f^{\ev,0}\bigg(\frac{y}{2},x\bigg)\\
&=&\frac{\varepsilon_f}{2^{\frac{k-2}{2}}}\bigg(r_f^{\ev,0}(x,2y)+r_f^{\ev,0}(x+y,2y)-r_f^{\ev,0}(x+y,2x)-r_f^{\ev,0}(x,y)\bigg) \\
&=&\frac{\varepsilon_f}{2^{\frac{k-2}{2}}}\cdot\bigg(r_{f|U_2}^{\ev,0}(x,y)-r_f^{\ev,0}(x,y)\bigg)\\
&=&\frac{\varepsilon_f}{2^{\frac{k-2}{2}}}\cdot\bigg(-2^{\frac{k-2}{2}}\varepsilon_f\cdot r_f^{\ev,0}(x,y)-r_f^{\ev,0}(x,y)\bigg)\\
&=&\bigg(-1-\frac{\varepsilon_f}{2^{\frac{k-2}{2}}}\bigg)r_f^{\ev,0}(x,y),
\end{eqnarray*}
where the first two lines are considered modulo $(x^{k-2}-y^{k-2})$. Notice that in the first line, we already know it is even period polynomial by the formula of $U_2$-action, so we do not need to cancel the odd degree term. Hence we have shown the statement for $N=2$.\\
When $N=3$, for any restricted even period polynomial $r_f^{\ev,0}(X,Y)$ of $f\in \cS_k^{\textrm{new}}(\Gamma_0(3))^{\pm}$, we have
\begin{eqnarray*}
\cdot (\cD_{k,2}^3\cdot \cC_{k,2}^3): r_f^{\ev,0}(x,y)&\mapsto&\frac{1}{2}\bigg(-\frac{1}{2}r_f^{\ev,0}(y,x)-r_f^{\ev,0}(y,x+y)+r_f^{\ev,0}(x,x+y)\bigg)\\
&+&\frac{1}{2}\bigg(-\frac{1}{2}r_f^{\ev,0}(-y,x)-r_f^{\ev,0}(-y,x-y)+r_f^{\ev,0}(x,x-y)\bigg)+\frac{1}{2}r_f^{\ev,0}\bigg(\frac{y}{3},x\bigg)\\
&=&\frac{1}{2}\bigg(-r_f^{\ev,0}(y,x)-r_f^{\ev,0}(y,x+y)+r_f^{\ev,0}(x,x+y)\\
&&\qquad\qquad-r_f^{\ev,0}(y,x-y)+r_f^{\ev,0}(x,x-y)\bigg)+\frac{1}{2}r_f^{\ev,0}\bigg(\frac{y}{3},x\bigg)\\
&=&\frac{\varepsilon_f}{2\cdot 3^{\frac{k-2}{2}}}\bigg(r_f^{\ev,0}(x,3y)+r_f^{\ev,0}(x+y,3y)-r_f^{\ev,0}(x+y,3x)\\
&&\qquad\qquad+r_f^{\ev,0}(x-y,3y)-r_f^{\ev,0}(x-y,3x)-r_f^{\ev,0}(x,y)\bigg)\\
&=&\frac{\varepsilon_f}{2\cdot3^{\frac{k-2}{2}}}\cdot\bigg(r_{f|U_3}^{\ev,0}(x,y)-r_f^{\ev,0}(x,y)\bigg)\\
&=&\frac{\varepsilon_f}{2\cdot3^{\frac{k-2}{2}}}\cdot\bigg(-3^{\frac{k-2}{2}}\varepsilon_f\cdot r_f^{\ev,0}(x,y)-r_f^{\ev,0}(x,y)\bigg)\\
&=&\bigg(-\frac{1}{2}-\frac{\varepsilon_f}{2\cdot 3^{\frac{k-2}{2}}}\bigg)r_f^{\ev,0}(x,y),
\end{eqnarray*}
where the first two lines are considered modulo $(x^{k-2}-y^{k-2})$. The first line here split into to parts since we want to cancel the odd degree terms. Hence we have also shown the statement for $N=3$.
\end{proof}

%%SECTION 7%%%%%%%%%%%%%%%%%%%%%%%%%%%%%%%%%%%%%%%%%%%%%%%%%%%%%%%%%%%%%%%%%%%%%%%
\section{Examples}\label{sc7}
In this section, we will provide some example for Theorem \ref{hecke} and Theorem \ref{newform}.
\begin{example}
When $k=12$, we have
\begin{eqnarray*}
\cC_{12,2}^{2}&=&
\left(\begin{array}{rrrr}
\frac{6885}{256} & \frac{5355}{128} & -\frac{5355}{128} & -\frac{26775}{1024} \\
\frac{945}{64} & \frac{441}{32} & -\frac{13167}{1024} & -\frac{945}{64} \\
\frac{45}{8} & \frac{1905}{1024} & -\frac{15}{16} & -\frac{45}{8} \\
\frac{1533}{1024} & 0 & 0 & -\frac{3}{4}
\end{array}\right)\\
\cC_{12,2}^{3}&=&\left(\begin{array}{rrrr}
\frac{3280}{243} & \frac{45920}{2187} & -\frac{45920}{2187} & -\frac{783920}{59049} \\
\frac{1820}{243} & \frac{5096}{729} & -\frac{398216}{59049} & -\frac{1820}{243} \\
\frac{80}{27} & \frac{43720}{59049} & -\frac{40}{81} & -\frac{80}{27} \\
\frac{39364}{59049} & 0 & 0 & -\frac{4}{9}
\end{array}\right)
\end{eqnarray*}
Those two matrices both has one left eigenvector $(1,-3,3,-1)$, and it corresponds to the restricted even period polynomial
\begin{equation*}
r_f^{\ev,0}(x,y)=x^2y^8-3x^4y^6+3x^6y^4-x^8y^2
\end{equation*}
of the unique cusp form in $\cS_{12}(\SL_2(\Z))$. The corresponding eigenvalues are
\begin{itemize}
\item $N=2$, 
$$\frac{\lambda_2-(1+2^{k-1})}{2^{k-2}}=-\frac{2073}{1024}=691\cdot\frac{-3}{1024}.$$
\item $N=3$,
$$\frac{\lambda_3-(1+3^{k-1})}{4\cdot3^{k-2}}=-\frac{44224}{59049}=691\cdot\frac{-64}{59049}.$$
\end{itemize}
Notice that the irregular prime $691$ is the one coming from the numerator of the Bernoulli number $B_{12}$.
\end{example}
\begin{remark}
The divisibility of the irregular primes from $B_k$ in the products of those eigenvalues
\begin{equation*}
\prod_{i=1}^{\dim\cS_k(\SL_2(\Z))}\frac{\lambda_{2,i}-(1+2^{k-1})}{2^{k-2}}\textrm{ and }\prod_{i=1}^{\dim\cS_k(\SL_2(\Z))}\frac{\lambda_{3,i}-(1+3^{k-1})}{4\cdot 3^{k-2}}
\end{equation*}
is coming from the well-known result about the congruence between Eisenstein series and cusp forms.
\end{remark}

\begin{example}
When $k=10$, we have
\begin{eqnarray*}
\cD_{10,2}^{2}\cdot \cC_{10,2}^{2}&=&
\begin{displaystyle}\left(\begin{array}{rrr}
-14 & 0 & \frac{53}{4} \\
-6 & -\frac{15}{16} & 6 \\
-\frac{127}{64} & 0 & 1
\end{array}\right)\end{displaystyle}\\
\cD_{10,2}^{3}\cdot \cC_{10,2}^{3}&=&
\begin{displaystyle}\left(\begin{matrix}
-14 & 0 & \frac{122}{9} \\
-6 & -\frac{40}{81} & 6 \\
-\frac{1093}{729} & 0 & 1
\end{matrix}\right)\end{displaystyle}
\end{eqnarray*}
The first matrix has one left eigenvector $(2,-7,8)$ with eigenvalue $-\frac{15}{16}$, and it corresponds to the restricted even period polynomial 
\begin{equation*}
r_f^{\ev,0}(x,y)=2x^2y^6-7x^4y^4+8x^6y^2
\end{equation*}
of the unique newform in $\cS_{10}^{\textrm{new}}(\Gamma_0(2))^-$.
The second matrix has one left eigenvector $(2,-9,18)$ with eigenvalue $-\frac{40}{81}$, and it corresponds to the restricted even period polynomial 
\begin{equation*}
r_f^{\ev,0}(x,y)=2x^2y^6-9x^4y^4+18x^6y^2
\end{equation*}
of the unique newform in $\cS_{10}^{\textrm{new}}(\Gamma_0(3))^-$.
It also has another left eigenvector $(1,0,-9)$ with eigenvalue $-\frac{41}{81}$, and it corresponds to the restricted even period polynomial 
\begin{equation*}
r_f^{\ev,0}(x,y)=x^2y^6-9x^6y^2
\end{equation*}
of the unique newform in $\cS_{10}^{\textrm{new}}(\Gamma_0(3))^+$.
\end{example}

%%SECTION 8%%%%%%%%%%%%%%%%%%%%%%%%%%%%%%%%%%%%%%%%%%%%%%%%%%%%%%%%%%%%%%%%%%%%%%%
\section{Applications}\label{sc8}
In this section, we will provide some applications about Theorem \ref{hecke} and Theorem \ref{newform}.
From Theorem \ref{hecke}, we know that when $N=2,3$ each eigenform $f\in\cS_{k}(\SL_2(\Z))$ gives us a left eigenvector of $\cC_{k,2}^{N}$ with eigenvalues
\begin{itemize}
\item $N=2$, 
$$\frac{\lambda_2-(1+2^{k-1})}{2^{k-2}},$$
\item $N=3$,
$$\frac{\lambda_3-(1+3^{k-1})}{4\cdot3^{k-2}}.$$
\end{itemize}
We have the following result due to Deligne, which gives an estimation of the coefficients of cusp forms.
\begin{theoremNoNum}[Deligne \cite{D}]
For any $f=\sum_{n=1}^\infty a_nq^n\in\cS_{k}(\SL_2(\Z))$ and prime $p$, we have
$$|a_p|\leq 2p^{\frac{k-1}{2}}.$$
\end{theoremNoNum}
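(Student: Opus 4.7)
The plan is to realize $a_p$ as the trace of Frobenius on a suitable $\ell$-adic Galois representation attached to $f$ and then invoke the Riemann hypothesis for smooth projective varieties over finite fields. First, I would attach to $f$ a compatible system of two-dimensional continuous $\ell$-adic representations $\rho_{f,\ell}:\mathrm{Gal}(\overline{\Q}/\Q)\to\mathrm{GL}_2(\overline{\Q}_\ell)$ constructed, following Deligne's 1968 Bourbaki seminar, as the $f$-isotypic component cut out by the Hecke algebra from the $(k-1)$st étale cohomology of a smooth compactification of the Kuga--Sato variety $\mathcal{E}^{k-2}$, the $(k-2)$-fold fibered self-product of the universal elliptic curve over a modular curve (of level $1$, with an auxiliary level trick to rigidify if needed). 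The Eichler--Shimura congruence relation then implies that for every prime $p\neq\ell$ the geometric Frobenius $\mathrm{Frob}_p$ acts on $\rho_{f,\ell}$ with characteristic polynomial $X^2-a_pX+p^{k-1}$.

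From this identification, writing the eigenvalues of $\mathrm{Frob}_p$ as $\alpha_p,\beta_p$, I immediately obtain $\alpha_p+\beta_p=a_p$ and $\alpha_p\beta_p=p^{k-1}$. The decisive step is then to show that $|\alpha_p|=|\beta_p|=p^{(k-1)/2}$ under every complex embedding of $\overline{\Q}_\ell$. This is exactly the Ramanujan--Petersson statement, and it is where Deligne's proof of the Weil conjectures enters: applied to the smooth projective Kuga--Sato variety reduced mod $p$, his theorem asserts that all eigenvalues of geometric Frobenius on $H^{k-1}_{\mathrm{et}}$ are algebraic integers whose archimedean absolute values are all equal to $p^{(k-1)/2}$. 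Passing to the Hecke-isotypic quotient for $f$ preserves this purity statement, so both $\alpha_p$ and $\beta_p$ inherit the required absolute value.

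With $|\alpha_p|=|\beta_p|=p^{(k-1)/2}$ in hand, the triangle inequality yields
\beqs
|a_p|=|\alpha_p+\beta_p|\leq |\alpha_p|+|\beta_p|=2p^{(k-1)/2},
\eeqs
which is the desired bound. The main obstacle is, of course, the Weil conjectures themselves; by comparison the construction of $\rho_{f,\ell}$ and the Eichler--Shimura identification of its Frobenius characteristic polynomial are comparatively formal machinery that I would cite rather than reprove. A secondary subtlety worth flagging is that for weights $k\geq 4$ one must work with the Kuga--Sato variety rather than the modular curve itself, which requires a careful desingularization at the boundary cusps and fibers of bad reduction; I would defer to the standard treatments (Deligne, Scholl) for this point. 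No elementary approach avoiding the full strength of Deligne's theorem is known, so any attempt to sharpen the sketch further would amount to reproducing Weil~II, which is well beyond the scope of a proof proposal.
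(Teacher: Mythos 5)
The paper does not prove this statement at all: it is quoted verbatim as an external result with a citation to Deligne's \emph{La conjecture de Weil.\ I}, so there is no internal argument to compare yours against. Your sketch is the standard and correct outline of how the result is actually established: construct the two-dimensional $\ell$-adic representation $\rho_{f,\ell}$ inside $H^{k-1}_{\mathrm{et}}$ of a desingularized Kuga--Sato variety (Deligne's 1968/69 Bourbaki expos\'e, later completed by Scholl), use the Eichler--Shimura congruence relation to identify the characteristic polynomial of $\mathrm{Frob}_p$ as $X^2-a_pX+p^{k-1}$, invoke purity from the Weil conjectures to get $|\alpha_p|=|\beta_p|=p^{(k-1)/2}$ in every complex embedding, and finish with the triangle inequality. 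Two small corrections. First, the bound in the form $|a_p|\leq 2p^{(k-1)/2}$ requires $f$ to be a \emph{normalized Hecke eigenform}; your argument implicitly assumes this (the isotypic decomposition and the factorization $X^2-a_pX+p^{k-1}$ only make sense for eigenforms), and the paper's statement is loose on this point as well --- for a general cusp form one only gets the corresponding bound after decomposing into eigenforms, with a constant depending on $f$. Second, what is needed here is Weil~I (the purity statement for smooth projective varieties over finite fields), not Weil~II; the latter is the stronger relative statement and is not required for this application, consistent with the paper's citation.
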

Therefore, we can see the corresponding eigenvalues for eigenforms have the following asymptotical behavior.
\begin{eqnarray*}
\frac{\lambda_2-(1+2^{k-1})}{2^{k-2}}&\sim& -2,\\
\frac{\lambda_3-(1+3^{k-1})}{4\cdot3^{k-2}}&\sim& -\frac{3}{4}.
\end{eqnarray*}
By a numerical computation up to weight $\leq 100$, we found that the number of the eigenvalues being closed to $-2$ ($-\frac{3}{4}$ resp.) is exactly the dimension of space of cusp forms, which suggests that we can use $\cC_{k,2}^{2}$ and $\cC_{k,2}^{3}$ to compute the eigenvalues of the $T_2$ and $T_3$ operators and also to compute the period polynomial of eigenforms. One advantage of using those two matrices is that we have explicit formulas for them which only contain binomial coefficients.

From Theorem \ref{newform}, we know that when $N=2,3$ each $f\in\cS_{k}^{\textrm{new}}(\Gamma_0(N))^{\pm}$ gives us a left eigenvector of $(\cD_{k,2}^{N}\cdot \cC_{k,2}^{N})$ with eigenvalues described by the theorem. By a numerical computation up to weight $\leq 100$, we found that the dimension of eigenspaces match with the dimension of the corresponding newform space, and hence we made the following conjecture. The summation starts from $k=6$, since we already known that $\cS_{<6}^{\textrm{new}}(\Gamma_0(N))=0$ is empty when $N=2,3$, and also our method only works for $k\geq 6$ since $\o\o(k)\neq\varnothing$ when $k\geq 6$.
\begin{conj}
We have the following generating series of the dimensions of $\cS_{k}^{\mathrm{new}}(\Gamma_0(N))^{\pm}$ for $N=2,3$.
\begin{eqnarray}
\sum_{k=6}^\infty \dim(\cS^{\mathrm{new}}_k(\Gamma_0(2)))^+x^k&=&\frac{x^8}{(1-x^6)(1-x^8)},\\
\sum_{k=6}^\infty \dim(\cS^{\mathrm{new}}_k(\Gamma_0(2)))^-x^k&=&\frac{x^2(1+x^{18})}{(1-x^8)(1-x^{12})},\\
\sum_{k=6}^\infty \dim(\cS^{\mathrm{new}}_k(\Gamma_0(3)))^+x^k&=&\frac{x^8}{(1-x^2)(1-x^{12})},\\
\sum_{k=6}^\infty \dim(\cS^{\mathrm{new}}_k(\Gamma_0(3)))^-x^k&=&\frac{x^6(1+x^8+x^{10}-x^{12})}{(1-x^4)(1-x^{12})}.
\end{eqnarray}
\end{conj}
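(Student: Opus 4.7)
The plan is to prove each generating-function identity by purely classical modular-forms dimension counting, independently of Theorems \ref{hecke} and \ref{newform}. The main reduction is that for prime $N$ the cusp form space decomposes as
\[
\cS_k(\Gamma_0(N))=\cS_k^{\mathrm{new}}(\Gamma_0(N))\oplus\bigoplus_{f}\langle f(z),f(Nz)\rangle,
\]
with $f$ running over a basis of $\cS_k(\SL_2(\Z))$. A short computation in the spirit of Section \ref{sc5}, using the $\SL_2(\Z)$-invariance $f(-1/w)=w^k f(w)$, gives $W_N(f(z))=N^{k/2}f(Nz)$ and $W_N(f(Nz))=N^{-k/2}f(z)$, so each two-dimensional oldform block contributes exactly one dimension to each $W_N$-eigenspace. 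This yields
\[
\dim\cS_k^{\mathrm{new}}(\Gamma_0(N))^{\pm}=\tfrac{1}{2}\Bigl(\dim\cS_k(\Gamma_0(N))\pm\mathrm{tr}\bigl(W_N\mid\cS_k(\Gamma_0(N))\bigr)\Bigr)-\dim\cS_k(\SL_2(\Z)),
\]
and the conjecture reduces to computing each right-hand-side term as a generating function in $x$.

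Two of the three ingredients are classical. The series $\sum_{k\text{ even}}\dim\cS_k(\SL_2(\Z))x^k=\frac{x^{12}}{(1-x^4)(1-x^6)}$ is standard, and $\sum_{k\text{ even}}\dim\cS_k(\Gamma_0(N))x^k$ follows from Riemann-Roch on the genus-zero curves $X_0(2)$, $X_0(3)$ (two cusps each, with one elliptic point of order $2$ resp.\ $3$) together with subtraction of the Eisenstein subspace. The only genuinely new ingredient is the trace $\mathrm{tr}(W_N\mid\cS_k(\Gamma_0(N)))$ as a function of $k$. This can be extracted from the Eichler-Selberg trace formula for Atkin-Lehner operators -- at squarefree prime level it reduces to a finite explicit sum of Hurwitz class numbers $H(4N-t^2)$ with bounded $t$. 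A cleaner alternative at these small levels is to analyze $W_N$ directly on the generating set of the graded ring $M_*(\Gamma_0(N))$: for instance one checks $W_2(E_2^*)=-E_2^*$ and $W_2(E_4)=4E_4(2z)$, and since $W_N$ swaps the two cusps of $X_0(N)$ the Eisenstein subspace contributes one to each sign in each even weight $\geq 4$. Reading off the bigraded Hilbert series of $M_*(\Gamma_0(N))^{\pm}$ and subtracting the Eisenstein piece gives $\sum_k\dim\cS_k(\Gamma_0(N))^{\pm}x^k$.

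Assembling the pieces and subtracting $\frac{x^{12}}{(1-x^4)(1-x^6)}$ from each sign yields a rational expression for $\sum_k\dim\cS_k^{\mathrm{new}}(\Gamma_0(N))^{\pm}x^k$ which one must then show equals the four predicted formulas. The main obstacle, in my view, is this final algebraic step: the asymmetric numerators $1+x^{18}$ for $(\Gamma_0(2))^-$ and especially $1+x^8+x^{10}-x^{12}$ for $(\Gamma_0(3))^-$ suggest that the $W_N^{\pm}$-eigensubrings of $M_*(\Gamma_0(N))$ are not freely generated and require a small amount of structural work to pin down module relations (already at $N=3$ the full ring $M_*(\Gamma_0(3))$ is a rank-two free module over $\C[E_2^{(3)},E_6]$ rather than a polynomial ring). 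Once the correct presentations are in hand, the identities reduce to matching a finite number of initial coefficients together with pole-order considerations, which is routine but where bookkeeping errors are easiest to introduce. A useful sanity check along the way is that summing the $+$ and $-$ predictions should give $\sum_k\dim\cS_k^{\mathrm{new}}(\Gamma_0(2))x^k=\frac{x^2(1+x^{12})}{(1-x^6)(1-x^8)}$ and an analogous simplification for $N=3$, both of which can be verified directly from the classical formulas for $\dim\cS_k^{\mathrm{new}}(\Gamma_0(N))$ before any splitting by the Atkin-Lehner sign.
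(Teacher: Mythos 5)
The first thing to say is that the paper does not prove this statement at all: it is stated as a conjecture, supported only by a numerical computation of the eigenspace dimensions of $\cD_{k,2}^{N}\cdot\cC_{k,2}^{N}$ up to weight $100$ together with Theorem \ref{newform}. So any actual proof is by definition a different route. Your reduction is the correct classical one and, carried to completion, it would upgrade the conjecture to a theorem: the oldform block $\langle f(z),f(Nz)\rangle$ computation is right (the block of $W_N$ is $\left(\begin{smallmatrix}0&N^{-k/2}\\ N^{k/2}&0\end{smallmatrix}\right)$, trace $0$, one dimension to each sign), so $\dim\cS_k^{\mathrm{new}}(\Gamma_0(N))^{\pm}=\tfrac12\bigl(\dim\cS_k(\Gamma_0(N))\pm\mathrm{tr}\,W_N\bigr)-\dim\cS_k(\SL_2(\Z))$, and all three ingredients are rational in $x$. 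I spot-checked your scheme for $N=2$ via the ring route ($M_*(\Gamma_0(2))=\C[X_2,Y_4]$ with $W_2$ acting by $-1$ on both generators, Eisenstein part contributing one dimension to each sign for $k\ge 4$) and it reproduces the conjectured series for $k\ge 6$.

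That said, as written this is a program rather than a proof. The single decisive input --- $\mathrm{tr}\bigl(W_N\mid\cS_k(\Gamma_0(N))\bigr)$ as an explicit function of $k$, equivalently the Hilbert series of the $W_N$-eigensubrings of $M_*(\Gamma_0(N))$ --- is never actually computed; you defer it to ``Eichler--Selberg for Atkin--Lehner operators'' or to a ring presentation you acknowledge you have not pinned down (the $N=3$ case, where $M_*(\Gamma_0(3))$ is only a rank-two free module over a polynomial ring, is exactly where this matters). Until that trace is written down and the resulting rational function is matched against the four right-hand sides, nothing is proved. One further point your sanity check actually exposes: the series $\frac{x^2(1+x^{18})}{(1-x^8)(1-x^{12})}$ has coefficient $1$ at $x^2$, while the left-hand sum starts at $k=6$ (and $\cS_2^{\mathrm{new}}(\Gamma_0(2))=0$); the same spurious $x^2$ appears in your check $\frac{x^2(1+x^{12})}{(1-x^6)(1-x^8)}$. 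So the second identity of the conjecture is literally false as an identity of power series and only holds coefficientwise for $k\ge 4$; any rigorous write-up must either correct the numerator or state the range of validity, and your proof outline should not treat ``matching a finite number of initial coefficients'' as routine precisely because of such boundary terms.
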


%%%%%%%%%%%%%%%%%%%%%%%%%%%%%%%%%%%%%%%%%%%%
\section*{Acknowledgement}
This study was funded by NSF Grant No. DMS-1401122. The author would like to thank Romyar Sharifi, Masanobu Kaneko and Koji Tasaka for many discussion.

%%%%%%%%%%%%%%%%%%%%%%%%%%%%%%%%%%%%%%%%%%%%
%%%%%%%%%%%%%%%%%%%%%%%%%%%%%%%%%%%%%%%%%%%%
%%%%%%%%%%%%%%%%%%%%%%%%%%%%%%%%%%%%%%%%%%%%
%%%%%%%%%%%%%%%%%%%%%%%%%%%%%%%%%%%%%%%%%%%%%%%%%%%%%%%%%%%%%%%%%%%%%%%%
\bibliographystyle{amsplain}

\end{document}